\newlength\myheight
\newlength\mydepth
\settototalheight\myheight{Xygp}
\newtheorem{theorem}{Theorem}[section]
\newtheorem{lemma}[theorem]{Lemma}
\newtheorem{proposition}[theorem]{Proposition}
\newtheorem{corollary}[theorem]{Corollary}
\newtheorem{thmx}{Theorem}
\theoremstyle{definition}
\newtheorem{definition}[theorem]{Definition}
\newtheorem{example}[theorem]{Example}
\theoremstyle{remark}
\newtheorem{remark}[theorem]{Remark}
\newtheorem{notation}[theorem]{Notation}
\title{Balanced complexes and effective divisors on $\overline{M}_{0,n}$}
\author{Jos\'e Luis Gonz\'alez, Elijah Gunther and Olivia Zhang}
\address{Department of Mathematics, University of California, Riverside, Riverside, CA 92521}
\email{jose.gonzalez@ucr.edu}
\address{Department of Mathematics, Yale University, New Haven, CT 06511}
\email{elijah.gunther@yale.edu}
\address{Department of Mathematics, Yale University, New Haven, CT 06511}
\email{olivia.zhang@yale.edu}
\let\oldbibliography\thebibliography
\renewcommand{\thebibliography}[1]{\oldbibliography{#1}
\setlength{\itemsep}{3pt}} 
\begin{document}

\begin{abstract}
Doran, Jensen and Giansiracusa showed a bijection between homogeneous elements in the Cox ring of $\overline{M}_{0,n}$ not divisible by any exceptional divisor section, and weighted pure-dimensional simplicial complexes satisfying a zero-tension condition.
Motivated by the study of the monoid of effective divisors, the pseudoeffective cone and the Cox ring of $\overline{M}_{0,n}$,
we point out a simplification of the zero-tension condition and study the space of balanced complexes.
We give examples of irreducible elements in the monoid of effective divisors of $\overline{M}_{0,n}$ for large $n$.
In the case of $\overline{M}_{0,7}$, we classify all such irreducible elements arising from nonsingular complexes and give an example of how irreducibility can be shown in the singular case.
\end{abstract}

\maketitle


 \section{Introduction}            \label{section.introduction}

In this article we study the monoid $M(\overline{M}_{0,n})$ of effective divisor classes in the moduli space $\overline{M}_{0,n}$ via its connection to balanced simplicial complexes introduced in \cite{DGJ}. 
Recall, the moduli spaces $\overline{M}_{0,n}$ parametrize stable rational curves, that is, nodal trees of $\mathbb{P}^1$'s with $n$ marked points and without automorphisms. 
By definition, for a smooth projective variety $X$ the monoid $M(X)$ consists of the classes $D$ in $\operatorname{Pic}(X)$ that admit an effective divisor representative, or in other words, 
that can be represented by a combination of codimension one subvarieties with nonnegative integer coefficients.
For $X$ projective, the only unit in the monoid $M(X)$ is zero, and the irreducible elements in $M(X)$ are those that cannot be written as a sum of two elements unless one of them is zero.
When $\operatorname{Pic}(X)$ is finitely generated and torsion-free there is a unique minimal generating
set for $M(X)$, which is given by the possibly infinite collection of irreducible elements of $M(X)$.
The monoid $M(\overline{M}_{0,n})$ is finitely generated for $n \leq 6 $ and the minimal generating set is known in each of these cases \cite{Castravet}.
For moduli spaces of stable $n$-pointed curves of higher genus, it is known that $M(\overline{\mathcal{M}}_{1,n})$ is not finitely generated for $n \geq 3$ \cite{ChenCoskun} and $M(\overline{\mathcal{M}}_{g,n})$ is not finitely generated if $g \geq 2 $ and $n \geq g+1$ \cite{Mullane}.
Our motivation comes from the following question. 

\vspace{-2mm}

\subsection*{Question 1.} Is $M(\overline{M}_{0,n})$ finitely generated for $n\geq 7$? In particular, is $M(\overline{M}_{0,7})$ finitely generated?

\vspace{1mm}

Given a smooth projective variety $X$ with a finitely generated Picard group $\operatorname{Pic}(X)$
an important invariant in birational geometry is its Cox ring or total coordinate ring 
\[
\operatorname{Cox}(X)=\bigoplus_{L\in Pic(X)}H^{0}(X,L),
\]
which algebraically encodes the geometry of $X$.
For instance, the algebra $\operatorname{Cox}(X)$ carries the data of all morphisms with connected fibers from $X$ to other projective varieties.
The first basic question when studying the Cox ring of a variety is to determine its finite generation.
For an example, the Cox rings of toric varieties and of log-Fano varieties are finitely generated.
When $\operatorname{Cox}(X)$ is finitely generated, all multigraded section rings on $X$ are finitely generated and the birational geometry of $X$ becomes an instance of the theory of variation of geometric invariant quotients, as explained in \cite{HuKeel}.
The monoid $M(X)$ is precisely the supporting monoid of $\operatorname{Cox}(X)$
and the collection of multidegrees in $\operatorname{Pic}(X)$ of a set of generators of $\operatorname{Cox}(X)$ yields a set of generators of $M(X)$.
The Cox ring of $\overline{M}_{0,n}$ is finitely generated for $n \leq 6$ by \cite{Castravet} and not finitely generated for $n \geq 10$ as proved in \cite{Mon.Non.MDS,GK14,HKL}.
It seems that settling the open cases $n=7,8,9$ will require new ideas as suggested in \cite[Remark 6.5]{HKL}.
Deciding the finite generation of $M(\overline{M}_{0,7})$ is a very natural problem from this point of view.
A fundamental problem in the birational geometry of moduli spaces is to understand the structure of the pseudoeffective cone, and more generally,  
the structure of the closed convex cones of effective codimension $k$ cycles, in the space of numerical classes of codimension $k$ cycles $\overline{\operatorname{NE}}^{k}_{\mathbb{R}}(X)$. 
The pseudoeffective cone of $\overline{M}_{0,n}$ is the closed convex hull of $M(\overline{M}_{0,n})$ in $\operatorname{Pic}(\overline{M}_{0,n}) \otimes {\mathbb{R}}=\overline{\operatorname{NE}}^{1}_{\mathbb{R}}(\overline{M}_{0,n})$, and we see that Question 1 also arises naturally from this point of view.

To study Question 1, we use the simplicial approach to effective divisors on $\overline{M}_{0,n}$ introduced by Doran, Giansiracusa and Jensen in \cite{DGJ}. 
By work of Kapranov \cite{Kap} over $\operatorname{Spec}\mathbb{C}$ and Hassett \cite{Has03} over $\operatorname{Spec}\mathbb{Z}$, the moduli space $\overline{M}_{0,n}$ can be constructed by fixing $n-1$ points in $\mathbb{P}^{n-3}$ in linear general position and successively blowing up all linear subspaces spanned by subsets of these points, or more precisely blowing up their strict transforms, in a suitable order. 
Doran and Giansiracusa showed in \cite{DG13} that $\operatorname{Cox}(\overline{M}_{0,n})$ is an invariant subring of a $\operatorname{Pic}(\overline{{M}}_{0,n})$-graded polynomial ring and it is an intersection of two explicit finitely generated rings.
Fixing an isomorphism $\overline{M}_{0,n} \cong \operatorname{Bl}\mathbb{P}^{n-3}$ as in \cite{Kap,Has03} and a presentation of $\operatorname{Cox}(\overline{M}_{0,n})$ as in \cite{DG13},  
then \cite[Theorem 3.5]{DGJ} says that for any $d \geq 0$ there is a bijection between degree $d+1$ homogeneous elements of $\operatorname{Cox}(\overline{M}_{0,n})$ not divisible by any exceptional divisor section and weighted $d$-complexes in $\{1,2, \ldots , n-1 \}$ that satisfy a suitable balancing condition (see Definition \ref{definition.balanced}).
Under this bijection, all possible nondegenerate balancings on a balanceable $d$-complex $\Delta$ yield homogeneous elements of $\operatorname{Cox}(\overline{M}_{0,n})$ that are sections of the same effective divisor $D_{\Delta} \in \operatorname{Pic}(\overline{{M}}_{0,n})$ given by
\begin{equation*}   
D_\Delta = (d+1)H - \sum_I{\Bigg(d+1 - \max_{\sigma \in \Delta} \Bigg\{ \sum_{i\in I} m(i \in \sigma)\Bigg\} \Bigg)} E_I \in \operatorname{Pic}(\overline{{M}}_{0,n}),
\end{equation*}
where $H$ denotes the pullback of the hyperplane class of $\mathbb{P}^{n-3}$, $I \subseteq \{1, 2, \ldots , n-1\}$ satisfies $1 \leq |I| \leq n-4$ and $E_I$ denotes the exceptional divisor over the linear subspace of $\mathbb{P}^{n-3}$ spanned by the points in $I$, and $m(i \in \sigma)$ denotes the number of times $i$ appears in the multiset $\sigma$ (see Sections \ref{section.complexes} and \ref{section.simplicial.approach} for details).
%


 \subsection*{Structure of the article}

 In Section~\ref{section.complexes}, we review the language of weighted simplicial complexes from \cite{DGJ} and establish some basic properties of balancings, products and the \emph{link} construction. 
In Theorem~\ref{overall} we show that the balancing condition on a weighted simplicial complex from \cite{DGJ} holds overall if it holds for the facets.
In Section~\ref{section.simplicial.approach}, we review the main results from \cite{DGJ} about the simplicial approach to effective divisors on $\overline{M}_{0,n}$ and give several examples
for $n=7$, $n=8$, and for large values of $n$, arising from triangulations of $d$-dimensional tori.
In the case of nonsingular complexes, the known results are stronger (see Theorem~\ref{thm.simplicial.approach} for details).
With a view toward a systematic study of minimal balanced complexes, in Section~\ref{section.balancings} 
we study the vector spaces of balancings over a characteristic zero field of the complete $d$-complex $\Delta_{n,d}$ and the complete nonsingular $d$-complex $\Delta^{ns}_{n,d}$. In particular, we compute their dimensions and give explicit bases. 
We deduce as Corollary~\ref{corollary.no.nonsingular} that there are no nonsingular balanceable $d$-complexes on $n$ vertices when $d+1 > \frac{n}{2}$.  
As an application, in Example~\ref{example.nonsingular.6vertices} we list all irreducible elements of $M(\overline{M}_{0,7})$ that arise from nonsingular complexes. 
Hence, the main task left is to find a method to determine the irreducibility of elements in $M(\overline{M}_{0,7})$ arising from singular complexes (notice Theorem~\ref{thm.simplicial.approach} no longer gives irreducibility).
Section~\ref{section.theorem} is a worked out example of how to deal with this issue, or at least, how to deal with it inductively (on the degree of the complex).
We consider the \emph{hypertree divisor} $\mathcal{H}$ on $\overline{M}_{0,7}$ associated to the unique hypertree graph on seven vertices \raisebox{-0.5mm}{{\includegraphics[angle=0,origin=c, height=\myheight]{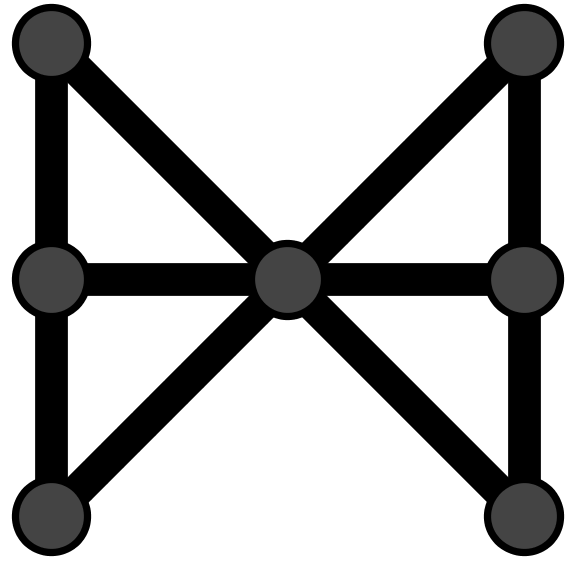}}}
which is an effective divisor irreducible in $M(\overline{M}_{0,7})$ by \cite[Theorem 1.5]{Hypertrees}.
The divisor $\mathcal{H}$ admits two representations as a complex in the different Kapranov models of $\overline{M}_{0,7}$, one of them by a nonsingular complex and one by a singular complex $\mathcal{A}$.
In Theorem~\ref{theorem.hypertree} we study the singular complex $\mathcal{A}$ and prove some known facts about the divisor $D_{\mathcal{A}}$ including its irreducibility in $M(\overline{M}_{0,7})$ (see Remark~\ref{remark.credit}).
We hope these ideas to show irreducibility are a step forward for the problem of determining the irreducibility in $M(\overline{M}_{0,7})$ of a given divisor arising from a singular complex, and hence toward an answer to Question 1.
%


\subsection*{Remark on our coefficients} Throughout, $R$ denotes a commutative ring with unit. 
We sometimes assume that $R$ is a characteristic zero domain or a characteristic zero field $R=K$.
For readers interested in more general coefficients, we point out that for a fixed $d \in \mathbb{Z}_{\geq 0}$ all results below about $d$-complexes and their proofs as presented, hold if one replaces \emph{characteristic zero domain} or \emph{characteristic zero field} with respectively a \emph{ring} or a \emph{field}, where $(d+1)!$ is not zero or a zero divisor, where these assumptions are present.


\subsection*{Acknowledgements} 
We would like to thank Erin Emerson, Connor Halleck-Dub\'e, Dave Jensen, (Jocelyn) Yuxing Wang and Nicholas Wawrykow for helpful discussions. 
We sincerely thank Jeremy Usatine for many insightful conversations.
This project started at the program Summer Undergraduate Mathematical Research at Yale (SUMRY).
We thank SUMRY and its organizer Sam Payne for their support.



\section{Weighted simplicial complexes}   \label{section.complexes}

In this section we review the language of weighted simplicial complexes from \cite{DGJ} and establish some basic properties of balancings, products and the link construction. 
Notably, in Theorem~\ref{overall} we show that the balancing condition on a weighted simplicial complex from \cite{DGJ} holds overall if it holds for the facets.
In Section \ref{section.simplicial.approach} we will review the connection of this material with effective divisors on the moduli space $\overline{M}_{0,n}$.


\subsection{Simplices and complexes}

Given a nonnegative integer $d$ and a set $\mathcal{S}$, 
a \emph{$d$-simplex} $\sigma$ on $\mathcal{S}$ 
is a multiset of cardinality $d+1$ whose elements are in $\mathcal{S}$
and a \emph{$d$-complex} $\Delta$ on $\mathcal{S}$ is a set $\Delta = \left \{ \sigma_1, \sigma_2, \ldots, \sigma_k\right\}$ whose elements are $d$-simplices on $\mathcal{S}$.
For simplicity, we often refer to these as \emph{simplices} and \emph{complexes}.
As an example, notice that a $1$-complex is the same as a graph with loops allowed but multiple edges disallowed.

\subsubsection{Multiplicities and nonsingularity}
The multiplicity of any $i \in \mathcal{S}$ in a multiset $S$ on $\mathcal{S}$ is by definition the number of times $i$ appears in the multiset $S$, and will be denoted in what follows by $m(i \in S)$.
A multiset $T$ is contained in a multiset $S$ if $m(i \in T) \leq m(i \in S)$ for every $i \in T$, and in that case the multiplicity $m(T \subseteq S)$ of $T$ in $S$ is defined to be
\begin{equation}  \label{definition.multiplicity}
	m(T \subseteq S) 
	= \prod_{i \in \mathcal{S}}{\binom{m(i\in S)}{m(i \in T)}}
		= \prod_{i\in \operatorname{Supp}(T)}{\binom{m(i\in S)}{m(i \in T)}}.
	\end{equation}
	
Note that the product in ($\ref{definition.multiplicity}$) is indeed identical if taken over $i \in \mathcal{S}$ or over the set of distinct elements in $T$ (which we denote by $\operatorname{Supp}(T)$, see \ref{subsection.supports.vertices}).
If $T \nsubseteq S$, the multiplicity $m(T \subseteq S)$ of $T$ in $S$ is defined to be zero and we observe that equation ($\ref{definition.multiplicity}$) still holds in this case. 
We use the symbol $\uplus$ to denote \emph{multiset sum}. 
For any multisets $S$ and $T$ on a set $\mathcal{S}$ and any $i \in \mathcal{S}$, $m( i \in S \uplus T ) = m( i \in S ) + m( i \in T )$.
The following lemma describes the behavior of the multiplicity $m(T \subseteq S)$ when a fixed multiset is added to both $T$ and $S$.

\begin{lemma}    \label{lemma.multiplicities}
Let $S$, $T$ and $U$ be multisets on a set $\mathcal{S}$. Then
\begin{equation*}
m(T \subseteq  S)  \cdot  m( S \subseteq S \uplus U )= m(T  \subseteq T \uplus U) \cdot m(T  \uplus U \subseteq S \uplus U).
\end{equation*}
\end{lemma}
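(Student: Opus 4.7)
The plan is to reduce the claimed identity to a term-by-term identity of binomial coefficients indexed by elements $i \in \mathcal{S}$, by expanding each of the four multiplicities using $(\ref{definition.multiplicity})$.

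First I would set $a_i = m(i \in T)$, $b_i = m(i \in S)$, and $c_i = m(i \in U)$ for each $i \in \mathcal{S}$. Since multiplicities add under multiset sum, $m(i \in S \uplus U) = b_i + c_i$ and $m(i \in T \uplus U) = a_i + c_i$. Applying $(\ref{definition.multiplicity})$, the identity to prove rewrites as
\[
\prod_{i \in \mathcal{S}} \binom{b_i}{a_i}\binom{b_i+c_i}{b_i} \;=\; \prod_{i \in \mathcal{S}} \binom{a_i+c_i}{a_i}\binom{b_i+c_i}{a_i+c_i}.
\]
Since each side is a product factor-by-factor in $i$, it suffices to verify the identity one element at a time.

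The second step is to check, for each $i$, the elementary identity
\[
\binom{b_i}{a_i}\binom{b_i+c_i}{b_i} \;=\; \binom{a_i+c_i}{a_i}\binom{b_i+c_i}{a_i+c_i}.
\]
When $0 \leq a_i \leq b_i$, both sides expand to $\frac{(b_i+c_i)!}{a_i!\,c_i!\,(b_i-a_i)!}$ after writing out the factorials, so equality is immediate. The only subtlety is the case $T \not\subseteq S$: then some $a_i > b_i$, which forces $\binom{b_i}{a_i} = 0$ and hence makes the left-hand side vanish, and also forces $a_i + c_i > b_i + c_i$, so $\binom{b_i+c_i}{a_i+c_i} = 0$ and the right-hand side vanishes as well. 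I do not anticipate any real obstacle; the whole lemma is a bookkeeping exercise with the definition of multiplicity, and the main thing to be careful about is handling the non-containment case uniformly so that the extension of $(\ref{definition.multiplicity})$ to $T \nsubseteq S$ is used consistently on both sides.
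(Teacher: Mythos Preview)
Your proposal is correct and follows essentially the same approach as the paper: reduce to the per-element binomial identity $\binom{b_i}{a_i}\binom{b_i+c_i}{b_i}=\binom{a_i+c_i}{a_i}\binom{b_i+c_i}{a_i+c_i}$ and take the product over $i\in\mathcal{S}$. Your treatment of the non-containment case is slightly more explicit than the paper's, which simply relies on the observation that $(\ref{definition.multiplicity})$ continues to hold when $T\nsubseteq S$.
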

\begin{proof}
For any $m,n,k \in \mathbb{Z}_{\geq 0}$ we have $\binom{m}{n}\binom{m+k}{m}=\binom{n+k}{n}\binom{m+k}{n+k}$. Hence, for any $i \in \mathcal{S}$ 
we have
\[
\binom{m(i \in S)}{m(i \in T)}\binom{m(i \in S)+m(i \in U)}{m(i \in S)}
=
\binom{m(i \in T)+m(i \in U)}{m(i \in T)}\binom{m(i \in S)+m(i \in U)}{m(i \in T)+m(i \in U)}.
\]
The desired formula now follows by taking the product of these identities over $i \in \mathcal{S}$. 
\end{proof}

A $d$-simplex $\sigma$ is \emph{nonsingular} if each of its elements has multiplicity one, and otherwise $\sigma$ is \emph{singular}.
A $d$-complex $\Delta$ is \emph{singular} if at least one of the simplices it contains is singular, and otherwise $\Delta$ is \emph{nonsingular}.

\subsubsection{Supports and vertices}                  \label{subsection.supports.vertices}
The \emph{support} $\operatorname{Supp}(\sigma)$ of a simplex $\sigma$ is the set of elements appearing in $\sigma$ disregarding their multiplicities.
The \emph{support} $\operatorname{Supp}(\Delta)$ of a complex $\Delta$ is the union of the supports of its simplices.
We will refer to the elements of $\mathcal{S}$ as \emph{vertices}. 
By the \emph{vertices} of a simplex or of a complex, we will mean the elements in its support.   
By \emph{a simplex or a complex on $n$ vertices}, we will mean respectively a simplex or a complex on some set $\mathcal{S}$ with cardinality $n$.
A $d$-complex $\Delta_1$ is a \emph{subcomplex} of a $d$-complex $\Delta_2$ if $\Delta_1$ is a subset of $\Delta_2$, and it is a \emph{proper subcomplex} if in addition $\Delta_1 \neq \Delta_2$.

\subsubsection{Product of complexes}
Given a $d_1$-complex $\Delta_1$ and a $d_2$-complex $\Delta_2$, their \emph{product} is the $(d_1+d_2 +1)$-complex
$\Delta_1 \cdot \Delta_2 = \{\sigma_1 \uplus \sigma_2 \:|\: \sigma_1 \in \Delta_1, \sigma_2 \in \Delta_2  \}$.

The following is an elementary but useful criterion for a complex not to be a product.

\begin{proposition} \label{nonproduct}
If for each vertex $i$ of a nonempty nonsingular complex $\Delta$ we have 
$$| \{ j \in \operatorname{Supp}(\Delta) :  \textnormal{$j\neq i$ and $\exists \sigma \in \Delta$ s.t. $i,j \in \sigma$} \} | < \frac{1}{2}{|\operatorname{Supp}(\Delta)|}$$ 
\emph{(}that is, each vertex shares a simplex with less than $\frac{1}{2}{|\operatorname{Supp}(\Delta)|}$ other vertices\emph{)},
then $\Delta$ is not a product. 
\end{proposition}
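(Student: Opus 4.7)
The plan is to argue by contradiction: suppose $\Delta = \Delta_1 \cdot \Delta_2$ for some $d_1$-complex $\Delta_1$ and $d_2$-complex $\Delta_2$ with $d_1 + d_2 + 1 = d$. Since $\Delta$ is nonempty, both factors are nonempty.

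The first key step is to observe that the supports of $\Delta_1$ and $\Delta_2$ must be disjoint. Indeed, if some $i \in \operatorname{Supp}(\Delta_1) \cap \operatorname{Supp}(\Delta_2)$ existed, there would be simplices $\sigma_1 \in \Delta_1$ and $\sigma_2 \in \Delta_2$ each containing $i$, and then $\sigma_1 \uplus \sigma_2 \in \Delta$ would contain $i$ with multiplicity at least two, contradicting nonsingularity of $\Delta$. Consequently $\operatorname{Supp}(\Delta) = \operatorname{Supp}(\Delta_1) \sqcup \operatorname{Supp}(\Delta_2)$, so in particular $|\operatorname{Supp}(\Delta_1)| + |\operatorname{Supp}(\Delta_2)| = |\operatorname{Supp}(\Delta)|$, forcing at least one of the two factors to contribute at least $\tfrac{1}{2}|\operatorname{Supp}(\Delta)|$ vertices. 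Relabeling if necessary, assume $|\operatorname{Supp}(\Delta_1)| \geq \tfrac{1}{2}|\operatorname{Supp}(\Delta)|$.

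Now pick any vertex $i \in \operatorname{Supp}(\Delta_2)$, and let $\sigma_2 \in \Delta_2$ be a simplex containing $i$. For every vertex $j \in \operatorname{Supp}(\Delta_1)$, choose a simplex $\sigma_1 \in \Delta_1$ containing $j$; then $\sigma_1 \uplus \sigma_2 \in \Delta$ contains both $i$ and $j$. Since $\operatorname{Supp}(\Delta_1)$ is disjoint from $\operatorname{Supp}(\Delta_2)$, every such $j$ is distinct from $i$, so $i$ shares a simplex with at least $|\operatorname{Supp}(\Delta_1)| \geq \tfrac{1}{2}|\operatorname{Supp}(\Delta)|$ other vertices. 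This contradicts the hypothesis, and completes the proof.

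There is no real obstacle here; the only subtle point is the disjointness of supports, which relies crucially on nonsingularity and would fail in general for singular complexes.
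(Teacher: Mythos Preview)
Your proof is correct and follows essentially the same approach as the paper's: argue by contradiction, use nonsingularity to get disjoint supports, take the larger factor, and observe that any vertex of the smaller factor shares a simplex with every vertex of the larger one. You are simply a bit more explicit about why the supports are disjoint and how the shared simplices arise.
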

\begin{proof}
Suppose that $\Delta$ is the product $\Delta = \Delta_1 \cdot \Delta_2$ of two complexes $\Delta_1$ and $\Delta_2$. 
Since $\Delta$ is nonsingular, $\operatorname{Supp}(\Delta)$ is the disjoint union of $\operatorname{Supp}(\Delta_1)$ and $\operatorname{Supp}(\Delta_2)$.  
Without loss of generality, we may assume that $|\operatorname{Supp}(\Delta_1)| \geq |\operatorname{Supp}(\Delta_2)|$, and therefore $|\operatorname{Supp}(\Delta_1)| \geq |\operatorname{Supp}(\Delta)|/2$. Since any $j \in \operatorname{Supp}(\Delta_2) \neq \emptyset$ must share a simplex with every vertex in $\operatorname{Supp}(\Delta_1)$, we have a contradiction.
\end{proof}


\subsection{Weightings and balancings}

Given a ring $R$ and a complex $\Delta$, an $R$-\emph{weighting} on $\Delta$ is a function $w:\Delta \rightarrow R$. 
An $R$-\emph{weighted complex} $(\Delta,w)$ is a complex $\Delta$ endowed with an $R$-weighting $w$.
The set of $R$-weightings on a complex $\Delta$ has a natural $R$-module structure.
We will say that an $R$-weighting $w:\Delta \rightarrow R$ is \emph{nondegenerate} if $w(\sigma)\neq 0$ for all $\sigma \in \Delta$, and otherwise we say it is \emph{degenerate}.
This terminology slightly differs from that in \cite{DGJ} where its authors included the nondegeneracy condition in the definition of an $R$-weighting.
Given any $R$-weighted complex $(\Delta,w)$ on $\{1,2,\ldots,n\}$, we can associate the polynomial 
\[
P(\Delta,w) = \sum_{\sigma \in \Delta} w(\sigma)  \, x^{\sigma} \in R[ x_1,x_2, \ldots , x_n], \textnormal{ where } x^{\sigma} = \prod_{i \in \operatorname{Supp}(\sigma)}x_{i}^{m(i\in \sigma)},  \textnormal{ for each } \sigma \in \Delta.
\]
This assignment gives a one-to-one correspondence between nonempty $d$-complexes with a nondegenerate $R$-weighting and homogeneous polynomials of degree $d+1$ in $R[ x_1,x_2, \ldots , x_n]$.

\subsubsection{Balancing condition}   \label{definition.balanced}
A weighted $d$-complex $(\Delta,w)$ is \emph{balanced in degree} $j$ if for each multiset $S$ with $|S| =j$ we have
\[
\sum_{S \subseteq  \sigma  \in \Delta  }{w(\sigma) \cdot m(S \subseteq \sigma)} =0.
\]
A weighted $d$-complex is \emph{balanced} if it is balanced in degree $j$, for each $j \in \{ 0, 1, \ldots , d\}$.
A \emph{balancing} on a $d$-complex is a weighting that is balanced. 
We say that a $d$-complex is \emph{balanceable} if it admits a nondegenerate balancing (notice that every nonempty $d$-complex admits degenerate balancings).
The balancing condition was introduced in \cite[Definition 3.2]{DGJ} and it can be reinterpreted as follows. A weighted $d$-complex $(\Delta,w)$ is balanced if and only if the associated polynomial $P(\Delta,w) \in R[ x_1,x_2, \ldots , x_n] \subseteq R[ x_1,x_2, \ldots , x_n,y]$ is invariant under the $R$-algebra automorphism $\phi:R[ x_1,x_2, \ldots , x_n,y]\rightarrow R[ x_1,x_2, \ldots , x_n,y]$ determined by $\phi(y)=y$ and $\phi(x_i)=x_i+y$ for all $i=1,2,\ldots,n$, where $y$ is an independent variable.

\begin{example}      \label{example.balancing.product}
If $(\Delta_1,w_1)$ and $(\Delta_2,w_2)$ are balanced weighted complexes, their product $\Delta = \Delta_1 \cdot \Delta_2$ inherits a balanced weighting $w$ which for each $\sigma \in \Delta$ is defined by 
\[
w(\sigma) = \sum_{\substack{\sigma_1 \in \Delta_1, \sigma_2 \in \Delta_2  \\  \sigma = \sigma_1 \uplus \sigma_2  }}   w_1(\sigma_1) \, w_2(\sigma_2).
\]
By construction, the weighted complex $(\Delta,w)$ corresponds to the polynomial $P(\Delta,w)=P(\Delta_1,w_1)P(\Delta_2,w_2)$, so it is indeed balanced as the product of invariant polynomials under $\phi$ is again invariant. 
Notice that $w$ might be degenerate even if $w_1$ and $w_2$ are nondegenerate.
\end{example}

\subsubsection{Minimal complexes}
A $d$-complex $\Delta$ is \emph{minimal} if it admits a nondegenerate balancing but none of its nonempty proper subcomplexes admits a nondegenerate balancing.

\begin{proposition}[cf. {\cite[Proposition 3.20]{DGJ}}] \label{onedimvectorspace}
Let $\Delta$ be a nonempty $d$-complex that admits a nondegenerate balancing over a field $R=K$. Then $\Delta$ is minimal if and only if 
the vector space of $K$-balancings of $\Delta$ is one-dimensional. 
\end{proposition}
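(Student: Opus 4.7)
The plan is to prove both directions by exploiting the fact that the balancing equations are linear in the weighting and only involve simplices of nonzero weight in an essential way. The key observation, which I would state as a preliminary remark, is that if $(\Delta, w)$ is a balanced weighted $d$-complex and $\Delta' \subseteq \Delta$ is the subcomplex of simplices where $w$ does not vanish, then the restriction of $w$ to $\Delta'$ is a nondegenerate balancing on $\Delta'$, since each balancing equation $\sum_{S \subseteq \sigma \in \Delta} w(\sigma)\, m(S \subseteq \sigma) = 0$ is unaffected by dropping simplices on which $w$ is zero. Conversely, extending a balancing on a subcomplex by zero to the ambient complex yields a balancing on the ambient complex.

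For the forward direction ($\Rightarrow$), suppose $\Delta$ is minimal and let $w_0$ be a fixed nondegenerate balancing on $\Delta$. Let $w_1$ be an arbitrary $K$-balancing on $\Delta$. Pick any simplex $\sigma_0 \in \Delta$ and set $\lambda = w_1(\sigma_0)/w_0(\sigma_0) \in K$, which makes sense because $w_0(\sigma_0) \neq 0$. Then $w_1 - \lambda w_0$ is again a balancing (the space of balancings is a $K$-subspace of the weightings), and it vanishes on $\sigma_0$. Applying the preliminary remark to $w_1 - \lambda w_0$, its nonvanishing locus is a proper subcomplex $\Delta' \subsetneq \Delta$ admitting a nondegenerate balancing. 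By minimality of $\Delta$, the only possibility is $\Delta' = \emptyset$, i.e.\ $w_1 = \lambda w_0$. Hence the space of balancings is spanned by $w_0$ and is one-dimensional.

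For the backward direction ($\Leftarrow$), suppose the space of $K$-balancings on $\Delta$ is one-dimensional, spanned by a fixed nondegenerate balancing $w_0$ (which exists by hypothesis). If $\Delta$ were not minimal, there would exist a nonempty proper subcomplex $\Delta' \subsetneq \Delta$ with a nondegenerate balancing $w'$. Extending $w'$ by zero on $\Delta \setminus \Delta'$ produces a balancing $\tilde{w}'$ on $\Delta$ which vanishes on the nonempty set $\Delta \setminus \Delta'$ but is nonzero on $\Delta'$. Such a $\tilde{w}'$ cannot be a scalar multiple of the nondegenerate $w_0$, contradicting one-dimensionality. Therefore $\Delta$ is minimal.

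The only step requiring genuine care is the preliminary remark that zero-weight simplices can be freely added or removed from a balanced complex without disturbing the balancing equations; everything else is then an immediate linear-algebraic consequence, and I do not anticipate any real obstacle beyond making this observation precise.
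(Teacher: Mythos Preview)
Your proof is correct and follows essentially the same approach as the paper: both directions hinge on the observation that extending a balancing by zero (or restricting to the nonvanishing locus) preserves the balancing condition, and the key step in each case is to produce a balancing that vanishes on at least one simplex by taking an appropriate linear combination of two given balancings. The paper phrases the forward direction as a contrapositive while you argue directly, but the underlying linear-algebra maneuver is identical.
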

\begin{proof}
Suppose that $\Delta = \{\sigma_{1},\sigma_2,\ldots,\sigma_{r}   \}$ and that $w:\Delta \rightarrow K$ is a nondegenerate balancing of $\Delta$. 
We may assume that $r \geq 2$. Let us prove the contrapositive of the give assertion.
If $\Delta$ admits a $K$-balancing $w':\Delta \rightarrow K$ linearly independent from $w$, then $w'(\sigma_1)w-w(\sigma_1)w'$ is a nondegenerate balancing of a nonempty proper subcomplex of $\Delta$, so $\Delta$ would not be minimal.
Conversely, if $\Delta$ has a nonempty proper subcomplex $\Delta'$ admitting a nondegenerate balancing $w':\Delta' \rightarrow K$, then $w'$ can be extended to a balancing $w':\Delta \rightarrow K$ that takes the value zero on $\Delta \smallsetminus \Delta'$ and which is clearly linearly independent from $\Delta$, thus the space of $K$-balancings of $\Delta$ is at least two dimensional.
\end{proof}


\subsection{Facet balancing implies overall balancing}

For any positive integer $d$, a \emph{facet} of a $d$-simplex $\sigma$ is a $(d-1)$-simplex contained in $\sigma$. 
By definition, the only facet of a $0$-simplex is the empty set.
A facet of a complex is a facet of one of its simplices.
Note that each nonsingular $d$-simplex contains exactly $d+1$ facets.

\begin{theorem}  \label{overall}
Let $\Delta$ be a weighted $d$-complex with weights on a characteristic zero domain $R$. If $\Delta$ is balanced in degree $d$, i.e. balanced for each of its facets, then it is balanced. 
\end{theorem}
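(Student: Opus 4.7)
The plan is to proceed by downward induction on $j$, showing that $(\Delta, w)$ is balanced in degree $j$ for each $j \in \{d, d-1, \ldots, 0\}$. The base case $j = d$ is exactly the hypothesis. For the inductive step, fix $j \in \{1, \ldots, d\}$, assume balancing in degree $j$, and fix a multiset $S$ with $|S| = j - 1$; the goal is to show that $\sum_{\sigma \in \Delta} w(\sigma) \, m(S \subseteq \sigma) = 0$.

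The engine of the induction is the combinatorial identity
\begin{equation*}
\sum_{v \in \operatorname{Supp}(\Delta)} \bigl(m(v \in S) + 1\bigr) \, m(S \uplus \{v\} \subseteq \sigma) = (d - j + 2) \, m(S \subseteq \sigma),
\end{equation*}
valid for every $\sigma \in \Delta$. This reduces to the pointwise relation $(s+1)\binom{t}{s+1} = (t - s)\binom{t}{s}$, applied with $s = m(v \in S)$ and $t = m(v \in \sigma)$, which rewrites each summand on the left as $(m(v \in \sigma) - m(v \in S)) \, m(S \subseteq \sigma)$; then $\sum_v m(v \in \sigma) - \sum_v m(v \in S) = (d+1) - (j-1) = d - j + 2$ completes the verification.

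Multiplying this identity by $w(\sigma)$, summing over $\sigma \in \Delta$, and swapping the order of summation on the left yields
\begin{equation*}
\sum_{v \in \operatorname{Supp}(\Delta)} \bigl(m(v \in S) + 1\bigr) \sum_{\sigma \in \Delta} w(\sigma) \, m(S \uplus \{v\} \subseteq \sigma) = (d - j + 2) \sum_{\sigma \in \Delta} w(\sigma) \, m(S \subseteq \sigma).
\end{equation*}
Because $|S \uplus \{v\}| = j$, each inner sum on the left vanishes by the inductive hypothesis, so the left-hand side is zero. Since $d - j + 2$ lies in $\{2, \ldots, d+1\}$ and $R$ is a characteristic zero domain, it is not a zero divisor, and the right-hand side forces $\sum_{\sigma \in \Delta} w(\sigma) \, m(S \subseteq \sigma) = 0$, finishing the induction. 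The only real point of verification is the combinatorial identity above, which is a routine manipulation of binomial coefficients, so I do not anticipate any substantive obstacle in carrying out this approach.
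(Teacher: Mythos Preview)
Your proposal is correct and follows essentially the same argument as the paper's proof: both proceed by downward induction on the degree, both rest on the same combinatorial identity (the paper writes the coefficient as $m(T \subseteq T_i)$, which equals your $m(v \in S)+1$), and both conclude by cancelling the factor $d-j+2$ using the characteristic zero domain hypothesis. The only cosmetic difference is that the paper sums over the ambient vertex set $\{1,\ldots,n\}$ rather than $\operatorname{Supp}(\Delta)$; this is harmless since any $S$ with support outside $\operatorname{Supp}(\Delta)$ satisfies the balancing condition trivially.
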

\begin{proof}
We may assume that $d \geq 1$ and that $\Delta = \left \{ \sigma _1, \sigma _2, \ldots, \sigma_r \right \}$ is a $d$-complex on $\{ 1, 2, \ldots,n \}$.
We denote the respective weights of the simplices in $\Delta$ by $w_1,w_2, \ldots,w_r$.
Let us assume that $\Delta$ is balanced in degree $d_0$ for some $1 \leq d_0 \leq d$ and 
we let $T \subseteq  \{ 1, 2, \ldots,n \}$ be any multiset with cardinality $|T| = d_0-1$. 
We denote $T_i = T \uplus \{ i \}$, $a_{ij} = m(i \in \sigma_j)$ and $b_i = m(i \in T)$ for each $i \in \{ 1, 2, \ldots,n \}$ and $j \in \{ 1, 2, \ldots,r \}$.
Using the identity $(n+1) {\binom{m}{n+1}} =(m-n){\binom{m}{n}}$ for any $m,n \in \mathbb{Z}_{\geq 0}$, as well as equation $(\ref{definition.multiplicity})$, for a fixed $1 \leq j \leq r$ we get

\begin{align*}
 \sum_{i =1}^{n}{m(T \subseteq T_i) \cdot m(T_i \subseteq \sigma_j)}  
&= \sum_{i =1}^{n}{(b_i+1) \cdot \binom{a_{ij}}{b_i +1} \cdot \prod_{\substack {1 \leq k \leq n \\ k \neq i}}{\binom{a_{kj}}{b_k}} }  
= \sum_{i =1}^{n}{(a_{ij}-b_i ) \cdot  \binom{a_{ij}}{b_i} \cdot \prod_{\substack {1 \leq k \leq n \\ k \neq i}}{\binom{a_{kj}}{b_k}} }
\\
&=  \sum_{i =1}^{n}{(a_{ij}-b_i ) \cdot m(T \subseteq \sigma_j)}  
= (|\sigma_j|-|T|) \cdot m(T \subseteq \sigma_j)
\\
&= (d-d_0+2) \cdot m(T \subseteq \sigma_j).
\end{align*}
By assumption $\Delta$ satisfies the balancing condition for each of the multisets $T_1,T_2,\ldots,T_n$, and hence we have
\begin{align*} 0 &=\sum_{i = 1}^{n}{m(T\subseteq T_i) \sum_{T_i \subseteq \sigma_j}{w_j \cdot m(T_i \subseteq \sigma_j)}}
=\sum_{T \subseteq \sigma_j}{w_j \cdot \sum_{T_i \subseteq \sigma_j }{m(T \subseteq T_i) \cdot m(T_i \subseteq \sigma_j)}}\\
&=\sum_{T \subseteq \sigma_j}{w_j \cdot \sum_{i =1}^{n}{m(T \subseteq T_i) \cdot m(T_i \subseteq \sigma_j)}}
=(d-d_0+2) \cdot \sum_{T \subseteq \sigma_j}{w_j} \cdot  m(T \subseteq \sigma_j) 
\end{align*}

Since $d-d_0+2$ is not zero or a zero divisor, we conclude that
\[
\sum_{T \subseteq \sigma_j}{w_j \cdot m(T \subseteq \sigma_j)}=0.
\]
It follows that $\Delta$ is balanced in degree $d_0-1$ if it is balanced in degree $d_0$. Since $\Delta$ is balanced in degree $d$ by assumption, 
using this argument repeatedly gives that $\Delta$ is balanced, as it has to be balanced in all required degrees $0 \leq d_0 \leq d$.
 \end{proof}


 \subsection{The link construction} Given a $d$-complex $\Delta$ and given $i \in \operatorname{Supp}(\Delta)$, 
 we define an associated $(d-1)$-complex $\Delta^*_{i}$ called the \emph{link} of $\Delta$ with respect to the vertex $i$ by 
 \[
 \Delta^{*}_{i}= \{  \sigma   \   |   \  \sigma \uplus \{i\}  \in \Delta  \}.
 \]
In other words, the simplices in $\Delta^*_{i}$ are obtained by taking the simplices in $\Delta$ containing $i$ and reducing its multiplicity by $1$.
More generally, if $S$ is a multiset of size $k \leq d$ and we define the $(d-k)$-complex $\Delta^*_{S}$ called the \emph{link} of $\Delta$ with respect to $S$ by 
 \[
 \Delta^{*}_{S}= \{  \sigma   \   |   \  \sigma \uplus S  \in \Delta  \}.
 \]
Any such link construction is equivalent to a particular iteration of the link construction on single vertices. 
In this subsection we will relate some properties of a given complex with those of its links.

\begin{example} \label{link.product} 
If a nonsingular complex $\Delta$ is a product  
$\Delta = \Delta_1 \cdot \Delta_2$, then 
$\Delta_i^* = (\Delta_1)_i^* \cdot \Delta_2$ for any vertex $i$ of $\Delta_1$
and
$\Delta_j^* = \Delta_1 \cdot (\Delta_2)_j^*$ for any vertex $j$ of $\Delta_2$. 
\end{example}

Any weighting $w: \Delta \rightarrow R$ on $\Delta$ induces a weighting $w^{*}_{S}:\Delta^{*}_{S} \rightarrow R$ on $w^{*}_{S}$, defined on each $\sigma$ by the formula
\[
w^{*}_{S}(\sigma)=w(\sigma \uplus S ) \cdot m(\sigma \subseteq \sigma \uplus S ).
\]
In the case that $S=\{i \}$ this formula becomes $w^{*}_{i}(\sigma)=w(\sigma \uplus \{i \} ) \cdot m(i \in \sigma \uplus \{i \} )$.

We observe that over a characteristic zero domain $R$, if the weighting $w:\Delta \rightarrow R$ on the $d$-complex $\Delta$ is nondegenerate, then the weighting $w^{*}_{S}:\Delta^{*}_{S} \rightarrow R$ is also nondegenerate. 

\begin{proposition}
If the $d$-complex $\Delta$ is balanced by the weight function $w:\Delta \rightarrow R$, then for any multiset $S$ with $|S| \leq d$
the complex $\Delta^{*}_{S}$ is balanced by the weight function $w^{*}_{S}:\Delta^{*}_{S} \rightarrow R$.
\end{proposition}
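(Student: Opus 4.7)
The plan is to translate the balancing condition for $(\Delta^*_S, w^*_S)$ into the balancing condition for $(\Delta, w)$ by a change of summation variable $\tau \mapsto \sigma = \tau \uplus S$, with the multiplicities on the two sides being reconciled via Lemma~\ref{lemma.multiplicities}. Concretely, fix a multiset $T'$ with $|T'| \leq d-|S|$, since these are the multisets for which the balancing condition on the $(d-|S|)$-complex $\Delta^*_S$ must be checked. I want to show
\[
\sum_{T' \subseteq \tau \in \Delta^*_S}  w^*_S(\tau) \cdot m(T' \subseteq \tau) = 0.
\]

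First I would plug in the defining formula $w^*_S(\tau) = w(\tau \uplus S) \cdot m(\tau \subseteq \tau \uplus S)$. The key identity to invoke is Lemma~\ref{lemma.multiplicities} with the substitutions (there) $T \leftarrow T'$, $S \leftarrow \tau$, $U \leftarrow S$, which gives
\[
m(T' \subseteq \tau) \cdot m(\tau \subseteq \tau \uplus S) = m(T' \subseteq T' \uplus S) \cdot m(T' \uplus S \subseteq \tau \uplus S).
\]
The first factor on the right is a combinatorial constant independent of $\tau$ and can be pulled out of the sum.

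Next I would change variables by setting $\sigma = \tau \uplus S$. The correspondence $\tau \leftrightarrow \sigma$ is a bijection between $\{\tau \in \Delta^*_S : T' \subseteq \tau\}$ and $\{\sigma \in \Delta : T' \uplus S \subseteq \sigma\}$: given such a $\sigma$, we have $S \subseteq \sigma$ so $\tau = \sigma \setminus S$ is a well-defined multiset in $\Delta^*_S$, and vertex-by-vertex comparison of multiplicities shows $T' \subseteq \tau$. After this substitution the sum becomes
\[
m(T' \subseteq T' \uplus S) \cdot \sum_{T' \uplus S \subseteq \sigma \in \Delta}   w(\sigma) \cdot m(T' \uplus S \subseteq \sigma).
\]
Since $|T' \uplus S| = |T'| + |S| \leq d$, the balancing of $(\Delta, w)$ applied to the multiset $T' \uplus S$ forces the inner sum to vanish, which is exactly what we wanted.

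There is no real obstacle here beyond bookkeeping: the whole proof reduces to correctly identifying Lemma~\ref{lemma.multiplicities} as the bridge between the weights on $\Delta$ and on $\Delta^*_S$, and to checking that the reindexing $\tau \mapsto \tau \uplus S$ is a bijection of the relevant index sets. One small point worth verifying in the writeup is that the computation works uniformly for $|T'| < d - |S|$ and $|T'| = d - |S|$; in the latter case the only candidate simplex $\tau$ is $T'$ itself, but the calculation above still goes through without modification.
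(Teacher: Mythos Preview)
Your proof is correct and follows essentially the same approach as the paper's: both invoke Lemma~\ref{lemma.multiplicities} (with the same substitution) to rewrite $w^*_S(\tau)\,m(T'\subseteq\tau)$, pull out the constant $m(T'\subseteq T'\uplus S)$, reindex via $\sigma=\tau\uplus S$, and apply the balancing of $\Delta$ at $T'\uplus S$. One small inaccuracy in your closing remark: when $|T'|=d-|S|$ the simplices $\tau\in\Delta^*_S$ have cardinality $d-|S|+1$, so $T'$ itself is \emph{not} a simplex of $\Delta^*_S$ and there may be many $\tau\supseteq T'$; this does not affect the argument.
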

\begin{proof}
Let us fix a multiset $T$ on $\{ 1, 2, \ldots , n\}$ with $|T| \leq d-|S|$.
By Lemma~\ref{lemma.multiplicities} we have
\begin{equation*}
m(\sigma \subseteq \sigma \uplus S)  \cdot  m(T \subseteq \sigma)=m(T \uplus S \subseteq \sigma \uplus S) \cdot m(T \subseteq T \uplus S).
\end{equation*}
Therefore, the balancing condition on $\Delta^{*}_{S}$ for $T$ is implied by that on $\Delta$ for $T \uplus S$ as follows,
\begin{align*} 
\sum_{T \subseteq \sigma \in \Delta^{*}_{S}} w^{*}_{S}(\sigma) \cdot m(T \subseteq \sigma)
&=\sum_{T \subseteq \sigma \in \Delta^{*}_{S}} w(\sigma \uplus S ) \cdot m(\sigma \subseteq \sigma \uplus S ) \cdot m(T \subseteq \sigma)
 \\
&=\sum_{T \subseteq \sigma \in \Delta^{*}_{S}} w(\sigma \uplus S ) \cdot m(T \uplus S \subseteq \sigma \uplus S) \cdot m(T \subseteq T \uplus S)
\\
&= m(T \subseteq T \uplus S) \cdot \sum_{T \uplus S \subseteq \sigma  \uplus S \in \Delta} w(\sigma \uplus S )  \cdot m(T \uplus S \subseteq \sigma \uplus S)
=0.
\end{align*}
\end{proof}

\begin{example}  
If $\Delta$ is a non-minimal $d$-complex over a characteristic zero domain $R$,
then either  
$\Delta$ is the union of two nonempty complexes on disjoint sets of vertices or 
there exist a vertex $i$ such that $\Delta_i^*$ is non-minimal.
\end{example}

Conversely, the balancing condition for a weighting $w$ on $\Delta$ is implied by the balancing of the proper links $\Delta^{*}_{S}$ by the weightings $w^{*}_{S}$ as we see next.   

\begin{proposition}     \label{proposition.links.balanced}
Let $\Delta$ be a $d$-complex with a weighting $w: \Delta \rightarrow R$ over a characteristic zero domain $R$, with $d \geq 1$.
If the induced weightings $w^{*}_{i}$ on the links $\Delta^{*}_{i}$ of all vertices are balanced, then $w$ is balanced. 
\end{proposition}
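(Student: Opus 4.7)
The plan is to reduce to degree $d$ using Theorem~\ref{overall}, and then to derive the facet-balancing of $w$ on $\Delta$ from the balancing of the link weightings $w^*_i$ in degree $d-1$.

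First, by Theorem~\ref{overall}, it suffices to show that $w$ is balanced in degree $d$, that is, that for every multiset $T$ on $\operatorname{Supp}(\Delta)$ with $|T|=d$ we have
\[
\sum_{T \subseteq \tau \in \Delta} w(\tau) \cdot m(T \subseteq \tau) = 0.
\]
Fix such a $T$. Since $d\geq 1$, $T$ is nonempty, so we may pick some $i \in \operatorname{Supp}(T)$ and let $T'$ be the multiset obtained from $T$ by removing one instance of $i$, so that $T = T' \uplus \{i\}$ and $|T'|=d-1$. The hypothesis on $w^*_i$ applied to the multiset $T'$ (of size $d-1 \leq \dim \Delta^*_i$) gives
\[
\sum_{T' \subseteq \sigma \in \Delta^*_i} w^*_i(\sigma) \cdot m(T' \subseteq \sigma) = 0.
\]

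Next, I would convert this sum over $\Delta^*_i$ into a sum over $\Delta$ via the bijection $\sigma \mapsto \tau := \sigma \uplus \{i\}$ between $\Delta^*_i$ and $\{\tau \in \Delta : i \in \operatorname{Supp}(\tau)\}$. Under this bijection the condition $T' \subseteq \sigma$ becomes $T \subseteq \tau$, and conversely any $\tau \in \Delta$ with $T \subseteq \tau$ automatically contains $i$ since $i\in \operatorname{Supp}(T)$. The key identity is the multiplicity relation
\[
m(T' \subseteq \sigma) \cdot m(\sigma \subseteq \sigma \uplus \{i\}) = m(T' \subseteq T' \uplus \{i\}) \cdot m(T' \uplus \{i\} \subseteq \sigma \uplus \{i\}),
\]
which is Lemma~\ref{lemma.multiplicities} applied with $S = \sigma$ and $U = \{i\}$. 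Since $m(\sigma \subseteq \sigma \uplus \{i\}) = m(i \in \tau)$ and $m(T' \subseteq T' \uplus \{i\}) = m(i \in T)$, combining this with the definition $w^*_i(\sigma) = w(\tau) \cdot m(i \in \tau)$ yields
\[
w^*_i(\sigma) \cdot m(T' \subseteq \sigma) = m(i \in T) \cdot w(\tau) \cdot m(T \subseteq \tau).
\]

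Substituting this into the balancing identity for $w^*_i$ gives
\[
m(i \in T) \cdot \sum_{T \subseteq \tau \in \Delta} w(\tau) \cdot m(T \subseteq \tau) = 0.
\]
Because $i \in \operatorname{Supp}(T)$ we have $m(i \in T) \geq 1$, so it is neither zero nor a zero divisor in the characteristic zero domain $R$, and we may cancel it to conclude that $w$ is balanced at $T$. Since $T$ was an arbitrary multiset of size $d$, $w$ is balanced in degree $d$, and then Theorem~\ref{overall} finishes the proof. The only delicate step is the multiplicity identity, but this is precisely the content of Lemma~\ref{lemma.multiplicities}, so there is no real obstacle.
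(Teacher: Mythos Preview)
Your proof is correct and follows essentially the same route as the paper: both pick an element $i$ from the test multiset, use Lemma~\ref{lemma.multiplicities} to rewrite $w^*_i(\sigma)\,m(T'\subseteq\sigma)$ as $m(i\in T)\,w(\tau)\,m(T\subseteq\tau)$, cancel the positive integer $m(i\in T)$, and then invoke Theorem~\ref{overall}. The only cosmetic difference is that the paper carries out the computation for all $1\le |S|\le d$ before appealing to Theorem~\ref{overall}, whereas you reduce to $|T|=d$ at the outset; the substance is identical.
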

\begin{proof}
Let $S$ be a multiset on $\operatorname{Supp}(\Delta)$ with $1 \leq |S| \leq d$. 
Fix a vertex $i \in S$ and let $\bar{S}$ be the multiset such that $S=\bar{S}\uplus \{i\}$.
For any $\sigma \in \Delta^*_{i}$, using Lemma~\ref{lemma.multiplicities} we have
\begin{align}  \label{equation.multiplicities.vertex}
m(\{i\} \in  \sigma \uplus \{i\})  \cdot  m( \bar{S} \subseteq \sigma) 
&=m(\sigma \subseteq   \sigma \uplus \{i \})  \cdot  m( \bar{S} \subseteq \sigma )     
= m( \bar{S}  \subseteq S) \cdot m( S  \subseteq \sigma \uplus \{i \})    \nonumber  \\
&= m(\{i \} \in  S) \cdot m(S  \subseteq \sigma \uplus \{i \}).
\end{align}
Using (\ref{equation.multiplicities.vertex}) and the balancing condition for $\Delta^{*}_{i}$ for $\bar{S}$ we have 
\begin{align*} 
0&=\sum_{\bar{S} \subseteq \sigma \in \Delta^{*}_{i}} w^{*}_{i}(\sigma) \cdot m(\bar{S} \subseteq \sigma)
=\sum_{\bar{S} \subseteq \sigma \in \Delta^{*}_{i}} w(\sigma \uplus \{i \} ) \cdot 
m(\{i\} \in  \sigma \uplus \{i\})  \cdot  m( \bar{S} \subseteq \sigma)    
\\
&=\sum_{\bar{S} \subseteq \sigma \in \Delta^{*}_{i}}  w(\sigma \uplus \{i \} ) \cdot
m(\{i \} \in  S) \cdot m(S  \subseteq \sigma \uplus \{i \})   
= m(\{i \} \in  S) \cdot  \sum_{\bar{S} \subseteq \sigma \in \Delta^{*}_{i}}  w(\sigma \uplus \{i \} ) \cdot
m(S  \subseteq \sigma \uplus \{i \})   
\\
&=m(\{i \} \in  S) \cdot  \sum_{S \subseteq \tau \in \Delta}  w(\tau ) \cdot  m(S  \subseteq \tau). 
\end{align*}
Since $m(\{i \} \in  S)$ is not zero or a zero divisor the balancing condition for $S$ on $\Delta$ holds. Theorem~\ref{overall} implies that $w$ is a balancing for $\Delta$ as desired.
\end{proof}


\begin{example}    \label{example.links.balancing}
The complex $\Delta=\{\{1,2\},\{1,3\},\{2,3\}\}$ does not admit any non-identically zero balancings over a characteristic zero domain, even though all its proper links admit nondegenerate balancings.
\end{example}


Now we relate the existence of nondegenerate balancings for a nonsingular product and each of the factors.

\begin{proposition}         \label{why}
Let $\Delta$ be a nonempty nonsingular complex that is a product.
If $\Delta$ admits a nondegenerate balancing over a ring $R$ then both factors admit nondegenerate balancings over $R$.
The converse holds if $R$ is a domain.
\end{proposition}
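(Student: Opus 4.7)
The plan is to handle the two directions with complementary constructions: the link construction for the forward direction, and the product-of-balancings construction from Example~\ref{example.balancing.product} for the converse.

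For the forward direction, I first observe that nonsingularity of $\Delta = \Delta_1 \cdot \Delta_2$ forces $\operatorname{Supp}(\Delta_1) \cap \operatorname{Supp}(\Delta_2) = \emptyset$, since any shared vertex would appear with multiplicity at least $2$ in a product simplex. Both factors must be nonempty since $\Delta$ is, so I pick some $\sigma_2 \in \Delta_2$; the key claim is that the link satisfies $\Delta^*_{\sigma_2} = \Delta_1$. Indeed, if $\sigma \uplus \sigma_2 \in \Delta$ and one decomposes it as $\sigma_1' \uplus \sigma_2'$ with $\sigma_i' \in \Delta_i$, then disjointness of supports together with the size count $|\sigma_2'| = d_2 + 1 = |\sigma_2|$ forces $\sigma_2' = \sigma_2$ and $\sigma = \sigma_1' \in \Delta_1$, while conversely every $\sigma_1 \in \Delta_1$ gives $\sigma_1 \uplus \sigma_2 \in \Delta$. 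The proposition showing that balancings descend to links then tells us that the induced weighting $w^*_{\sigma_2}$ is a balancing on $\Delta_1$, and nonsingularity collapses the factor $m(\sigma_1 \subseteq \sigma_1 \uplus \sigma_2)$ to $1$, so $w^*_{\sigma_2}(\sigma_1) = w(\sigma_1 \uplus \sigma_2) \neq 0$ for every $\sigma_1 \in \Delta_1$. A symmetric argument using any $\sigma_1 \in \Delta_1$ produces a nondegenerate balancing on $\Delta_2$.

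For the converse, assume $R$ is a domain and let $w_i: \Delta_i \to R$ be nondegenerate balancings. Example~\ref{example.balancing.product} supplies a balancing $w$ on $\Delta$, and the same disjointness-of-supports observation together with nonsingularity guarantees that every $\sigma \in \Delta$ admits a \emph{unique} decomposition $\sigma = \sigma_1 \uplus \sigma_2$ with $\sigma_i \in \Delta_i$. Hence the defining sum for $w(\sigma)$ collapses to the single term $w_1(\sigma_1)\, w_2(\sigma_2)$, which is nonzero because $R$ is a domain and both factors are nonzero.

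The only mildly subtle step is verifying $\Delta^*_{\sigma_2} = \Delta_1$; one must rule out the possibility that $\sigma$ contains vertices of $\operatorname{Supp}(\Delta_2) \setminus \operatorname{Supp}(\sigma_2)$, which is precisely where the cardinality count $|\sigma| = d_1 + 1$ is used. The hypothesis that $R$ is a domain enters only in the converse, to ensure the product of two nonzero elements of $R$ is nonzero; this is exactly the failure mode illustrated by the parenthetical remark in Example~\ref{example.balancing.product}.
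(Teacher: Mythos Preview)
Your proof is correct. Both directions share the paper's underlying construction---fix a simplex $\tau$ in one factor to extract weights on the other, and use the unique decomposition for the converse---but you package the verification differently. In the forward direction the paper defines $w_\tau(\sigma) = w(\sigma \uplus \tau)$ and checks the balancing condition by a one-line direct computation using $m(S \subseteq \sigma) = m(S \uplus \tau \subseteq \sigma \uplus \tau)$ when $\tau$ is disjoint from $S$ and $\sigma$; you instead identify $\Delta^*_{\sigma_2} = \Delta_1$ and invoke the proposition that balancings descend to links. Your route reuses existing machinery cleanly and makes the role of the link construction explicit; the paper's argument is more self-contained and does not depend on that earlier proposition. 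For the converse the paper again verifies the balancing condition directly via a factorization of the sum, whereas you cite Example~\ref{example.balancing.product}; the content is identical once one observes, as you do, that nonsingularity and disjoint supports collapse the defining sum for $w(\sigma)$ to the single term $w_1(\sigma_1)w_2(\sigma_2)$.
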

\begin{proof}
Suppose $\Delta= \Delta_1 \cdot \Delta_2$ for some $d_1$-complex $\Delta_1$ and some $d_2$-complex $\Delta_2$.
Then $\Delta_1$ and $\Delta_2$ are nonsingular and their supports are disjoint.
By symmetry, it is enough to show $\Delta_1$ admits the desired balancing.
Let $w:\Delta\rightarrow R$ be a nondegenerate balancing for $\Delta$. 
Fix a simplex $\tau$ in $\Delta_2$ and define a nondegenerate weighting $w_{\tau}:\Delta_{1}\rightarrow R$ by $w_{\tau}(\sigma)=w(\sigma \uplus \tau)$.
The balancing condition for any multiset $S$ on $\operatorname{Supp}(\Delta_1)$ with $|S| \leq d_{1}$ is verified as follows
 \begin{align*}
\sum_{S \subseteq \sigma \in \Delta_1} w_{\tau}(\sigma) \cdot m(S \subseteq \sigma) = \sum_{S\uplus \tau \subseteq \sigma \uplus \tau \in \Delta} w(\sigma \uplus \tau) \cdot m(S \uplus \tau \subseteq \sigma \uplus \tau)= \sum_{S\uplus \tau \subseteq \lambda \in \Delta} w(\lambda) \cdot m(S \uplus \tau \subseteq \lambda) =0.
\end{align*}
Conversely, given nondegenerate balancings $w_{1}:\Delta_{1}\rightarrow R$ and $w_{2}:\Delta_{2}\rightarrow R$, we define a weighting $w:\Delta \rightarrow R$ by $w(\sigma)=w_{1}(\sigma_1) \cdot w_{2}(\sigma_2)$, where $\sigma =\sigma_1 \uplus \sigma_2$ is the unique such expression with $\sigma_{1}\in \Delta_{1}$ and $\sigma_2 \in \Delta_{2}$. 
This weighting is nondegenerate if $R$ is domain. Given any multiset $S$ on $\operatorname{Supp}(\Delta)$ with $|S| \leq d_{1}+d_{2}+1$, it can be written in a unique way as $S=S_1 \uplus S_2$ for some multisets $S_1$ and $S_2$ respectively on $\operatorname{Supp}(\Delta_{1})$ and $\operatorname{Supp}(\Delta)_{2}$. Since either $|S_1| \leq d_{1}$ or $|S_2| \leq d_{2}$, we have
 \begin{align*}
\sum_{S \subseteq \sigma \in \Delta} \! w(\sigma) \cdot m(S \subseteq \sigma) 
&=\sum_{S \subseteq \sigma \in \Delta} \! w(\sigma)
=\sum_{\substack{S_1 \subseteq \sigma_1 \in \Delta_1    \\ S_2 \subseteq \sigma_2 \in \Delta_2}} \! w_1(\sigma_1) \cdot w_2(\sigma_2)     
=  \left( \sum_{S_1 \subseteq \sigma_1 \in \Delta_1} \! w(\sigma_1)  \right) \cdot  \left( \sum_{S_2 \subseteq \sigma_2 \in \Delta_2} \! w(\sigma_2)  \right)
\\
&= \left( \sum_{S_1 \subseteq \sigma_1 \in \Delta_1} w(\sigma_1) \cdot m(S_1 \subseteq \sigma_1) \right) \cdot  \left( \sum_{S_2 \subseteq \sigma_2 \in \Delta_2} w(\sigma_2) \cdot m(S_2 \subseteq \sigma_2)  \right)  =  0.
\end{align*} 
\end{proof}



\section{A simplicial approach to effective divisors on $\overline{M}_{0,n}$}     \label{section.simplicial.approach}

In the remainder of this article we think of the moduli space $\overline{M}_{0,n}$ using Kapranov-Hassett's description in \cite{Kap,Has03}.   
In other words, we fix points $p_1, p_2,\ldots,p_{n-1} \in \mathbb{P}^{n-3}$ in linearly general position and identify $\overline{M}_{0,n}$ with the iterated blow up of $\mathbb{P}^{n-3}$ along all linear subspaces spanned by subsets $\{ p_i \}_{i \in I}$ of these points 
with $1\leq |I| \leq n-4 $ 
(more precisely, the blow up is taken in a suitable order and along the strict transforms of the subspaces). 
We denote by $H$ the pullback of the hyperplane class of $\mathbb{P}^{n-3}$
and by $E_{I}$ the strict transform of the exceptional divisor that projects onto the subspace spanned by the points $p_i$ with $i \in I$.  
The classes $H$ and $E_{I}$, for each $I \subseteq \{1,2,\ldots, n-1\}$ with $1\leq |I| \leq n-4 $, form a $\mathbb{Z}$-basis of $\operatorname{Pic}(\overline{{M}}_{0,n})$.
Our motivation to study weighted simplicial complexes in Section~\ref{section.complexes} is the connection to effective divisors on $\overline{M}_{0,n}$ summarized in the following theorem of Doran, Giansiracusa and Jensen (see~\cite[Theorem 1.3 and Theorem 1.4]{DGJ}).
\begin{thmx}[Doran-Giansiracusa-Jensen's simplicial approach to effective divisors on $\overline{M}_{0,n}$]     \label{thm.simplicial.approach}
Let $d \geq 0$ and $n \geq 5$ be positive integers. Then,
\begin{itemize}
\item[(A1)] There is a bijection between degree $d + 1$ multihomogeneous
elements of $\operatorname{Cox}(\overline{M}_{0,n})$, not divisible by any exceptional divisor section, and nondegenerately balanced
$d$-complexes on $\{1,2,\ldots, n-1\}$. Moreover, all nondegenerate balancings on a complex $\Delta$ correspond to elements with class 
\begin{equation}   \label{DGJrecipe2}
D_\Delta = (d+1)H - \sum_I{\Bigg(d+1 - \max_{\sigma \in \Delta} \Bigg\{ \sum_{i\in I} m(i \in \sigma)\Bigg\} \Bigg)} E_I \in \operatorname{Pic}(\overline{{M}}_{0,n}),
\end{equation}
\item[(A2)] Let $\Delta$ be a nonsingular, balanceable, minimal $d$-complex on $\{1,2,\ldots, n-1\}$, with $d \leq n - 5$, over a field, and which is not a product. 
Then $D_{\Delta}$ is irreducible in $M(\overline{M}_{0,n})$, $h^{0}(\overline{M}_{0,n},D_{\Delta})$=1, and every generating set for
$\operatorname{Cox}(\overline{M}_{0,n})$ includes the unique up to scalar section of $D_{\Delta}$.
\end{itemize}
\end{thmx}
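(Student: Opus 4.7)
The plan is to combine Doran-Giansiracusa's presentation of $\operatorname{Cox}(\overline{M}_{0,n})$ from \cite{DG13} with the polynomial reformulation of the balancing condition established in Section~\ref{section.complexes}: namely, that a weighted $d$-complex $(\Delta,w)$ on $\{1,\dots,n-1\}$ is balanced precisely when $P(\Delta,w)$ is invariant under the $R$-algebra automorphism $\phi$ sending $x_i\mapsto x_i+y$.

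For part (A1), the starting point is the description in \cite{DG13} of the subring of $\operatorname{Cox}(\overline{M}_{0,n})$ spanned by multihomogeneous elements not divisible by any exceptional divisor section as an invariant subring of a $\operatorname{Pic}$-graded polynomial ring in $n-1$ variables; at each multidegree the invariance condition is exactly the $\phi$-invariance above. Combining this with the one-to-one correspondence between homogeneous polynomials of degree $d+1$ in $R[x_1,\dots,x_{n-1}]$ and nondegenerately weighted $d$-complexes on $\{1,\dots,n-1\}$ produces the desired bijection. To verify the class formula I would compute the order of vanishing of the pulled-back polynomial $F=P(\Delta,w)$ along each exceptional divisor $E_I$ in Kapranov's iterated blow-up: after an affine change of coordinates centered at the linear span of $\{p_i\}_{i\in I}$, the order of vanishing of $F$ in the ``$I$-direction'' is readily seen to equal $\max_{\sigma\in\Delta}\sum_{i\in I}m(i\in\sigma)$, and subtracting this contribution from the naive pullback $(d+1)H$ yields the formula in~(\ref{DGJrecipe2}).

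For part (A2), the equality $h^0(\overline{M}_{0,n},D_\Delta)=1$ combines Proposition~\ref{onedimvectorspace}, by which minimality forces the space of balancings to be one-dimensional, with part (A1), provided one checks that every global section of $D_\Delta$ is non-divisible by exceptional divisor sections. This is where the hypothesis $d\le n-5$ enters: a section divisible by some $E_I$-section would realize $D_\Delta-E_I$ as an effective class whose sections would again, via (A1), correspond to balancings of some complex, and a bookkeeping of the coefficients in~(\ref{DGJrecipe2}) under the nonsingularity hypothesis shows no such decomposition is possible in this degree range. For irreducibility, assume $D_\Delta=D_1+D_2$ with $D_1,D_2$ effective and nonzero; then $h^0(D_\Delta)=1$ forces the unique section $s$ of $D_\Delta$ to factor as $s=s_1s_2$, with $s_i$ a section of $D_i$, and by the previous step any exceptional divisor factor would contradict $D_\Delta-E_I$ being non-effective, so both $s_i$ are of the non-exceptional form. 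By (A1), $s_i=P(\Delta_i,w_i)$ for nondegenerately balanced $(\Delta_i,w_i)$, and $P(\Delta,w)=P(\Delta_1,w_1)P(\Delta_2,w_2)$ forces $\Delta=\Delta_1\cdot\Delta_2$ as in Example~\ref{example.balancing.product}, contradicting the non-product assumption. The statement about generating sets of $\operatorname{Cox}(\overline{M}_{0,n})$ is then immediate: irreducibility combined with $h^0=1$ means the unique section of $D_\Delta$ cannot be written as a polynomial expression in sections of strictly smaller classes.

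The main obstacle is the effectivity analysis underlying $h^0(D_\Delta)=1$. The balancing-polynomial dictionary of (A1) controls only the ``interior'' portion of the Cox ring, so ruling out global sections with exceptional divisor factors requires translating divisibility by $E_I$-sections into a combinatorial statement about heavy subsets appearing in $\Delta$, then using both $d\le n-5$ and the nonsingularity of $\Delta$ to show that no such refinement is compatible with $\Delta$ being simultaneously minimal and non-product. This is also precisely the step at which the conclusion of (A2) breaks down for singular complexes, and it is what motivates the singular case analysis pursued later in Section~\ref{section.theorem}.
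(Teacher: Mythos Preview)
The paper does not prove Theorem~\ref{thm.simplicial.approach}; it is quoted from \cite[Theorems~1.3 and~1.4]{DGJ} as background and used as a black box throughout. So there is no ``paper's own proof'' to compare your proposal against, and any grading here is really against the original argument in \cite{DGJ}.

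That said, your outline has the right architecture for (A1) --- the $\mathbb{G}_a$-invariance interpretation of balancing and the Doran--Giansiracusa presentation of the Cox ring are exactly the ingredients --- but your order-of-vanishing claim is inverted. The multiplicity of the hypersurface $V(P(\Delta,w))$ along the linear span $L_I$ is the \emph{minimum} total degree in the variables complementary to $I$, namely
\[
\min_{\sigma\in\Delta}\sum_{i\notin I} m(i\in\sigma)\;=\;(d+1)-\max_{\sigma\in\Delta}\sum_{i\in I} m(i\in\sigma),
\]
not $\max_\sigma\sum_{i\in I}m(i\in\sigma)$ as you wrote. With the correct quantity, the strict-transform class is immediately $(d+1)H-\sum_I\bigl((d+1)-\max_\sigma\sum_{i\in I}m(i\in\sigma)\bigr)E_I$, which is~(\ref{DGJrecipe2}); your version would give the wrong formula.

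For (A2), the skeleton (minimality $\Rightarrow$ one-dimensional balancing space $\Rightarrow$ $h^0=1$; non-product $\Rightarrow$ irreducibility via factoring the unique section) is correct, and you rightly isolate the exclusion of $E_I$-divisible sections as the crux. But ``a bookkeeping of the coefficients\ldots shows no such decomposition is possible'' is not yet an argument: one has to show concretely that, for $\Delta$ nonsingular and $d\le n-5$, the class $D_\Delta-E_I$ is never effective for any $I$. In \cite{DGJ} this uses that nonsingularity forces every coefficient of $E_I$ with $|I|\le n-4-d$ to already be as negative as the effectivity constraints allow, and the degree bound $d\le n-5$ guarantees such $I$ exist and cover what is needed; your sketch should name this mechanism rather than defer it. Note also that if $D_\Delta-E_I$ were effective its sections need not themselves be free of exceptional factors, so invoking (A1) on them directly, as you do, requires an additional peeling-off argument.
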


We now give a series of examples over a characteristic zero field $K$.

\begin{example}
For any positive integers $d \geq 0$ and $n \geq 5$ and any $d$-complex $\Delta$ admitting a nondegenerate balancing, the divisor $D_{\Delta}$ is effective on $\overline{M}_{0,n}$.  
Moreover, 
if the class $D \in \operatorname{Pic}(\overline{M}_{0,n})$ is such that $D - E_{I}$ is not effective for any exceptional divisors $E_{I}$, then $D$ is effective if and only if there is a complex $\Delta$, admitting a nondegenerate balancing, such that $D_{\Delta} = D$.
\end{example}

\begin{example} \label{example.degree.zero}
In the Kapranov model of $\overline{M}_{0,7}$ with respect to the seventh marked point the irreducible elements of degree zero 
in the monoid of effective divisors
are the exceptional classes $E_{i}, E_{ij}, E_{ijk}$ where $i,j,k$ are distinct elements in $\{1,2,\ldots,6\}$. There are ${6 \choose 1} + {6 \choose 2} + {6 \choose 3} =41$ of these classes.
\end{example}

\begin{example}    \label{example.degree.one}
In the Kapranov model of $\overline{M}_{0,7}$ with respect to the seventh marked point the irreducible elements of degree one 
in the monoid of effective divisors
are the  divisors associated to the $0$-complexes $B_{ij}=\left\{ \{i\}, \{j\} \right\}$ where $i,j$ are distinct elements in $\{1,2,\ldots,6\}$ (See Figure~\ref{fig-complexes}). There are 15 of these classes and together with the divisor classes in Example~\ref{example.degree.zero} they are the ${7 \choose 2} + {7 \choose 3}=56$ \emph{boundary divisors} of $\overline{M}_{0,7}$.
\end{example}

\begin{example}    \label{example.degree.two}
The case $n=7$ of \cite[Theorem 1.5]{DGJ}, which was proved as an application of Theorem~\ref{thm.simplicial.approach}, says that
in the Kapranov model of $\overline{M}_{0,7}$ with respect to the seventh marked point the irreducible elements of degree two 
in the monoid of effective divisors
are the  divisors associated to the $1$-complexes $H_{ijkpqr}$, $T_{(i)(jk)(pq)}$ and $P_{(i)(jk)(r)(pq)}$ defined by
\begin{align*}
H_{ijkpqr} &= \left\{ \{i,j\}, \{j,k\}, \{k,p\}, \{p,q\}, \{q,r\}, \{r,i\} \right\}         
\\
T_{(i)(jk)(pq)} &= \left\{ \{i,j\}, \{i,k\}, \{j,k\}, \{i,p\}, \{i,q\}, \{p,q\} \right\}    
\\
P_{(i)(jk)(r)(pq)} &=\left\{ \{i,j\}, \{i,k\}, \{j,k\}, \{r,p\}, \{r,q\}, \{p,q\}, \{i,r\} \right\}       
\end{align*}
where $i,j,k,p,q,r$ are distinct elements in $\{1,2,\ldots,6\}$ (See Figure~\ref{fig-complexes}). 
There are respectively 60, 90 and 90 of these divisors.
\end{example}
\begin{figure}[ht]     %
\begin{center}
\includegraphics[height=2.7cm]{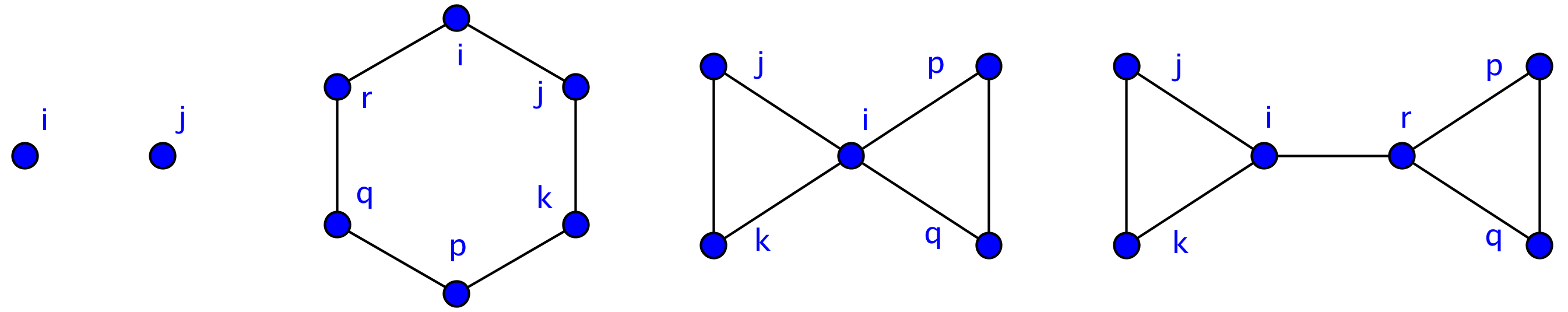} 
\caption{The complexes $B_{ij}$, $H_{ijkpqr}$, $T_{(i)(jk)(pq)}$ and $P_{(i)(jk)(r)(pq)}$.}    \label{fig-complexes}
\end{center}
\end{figure}

\begin{example}    \label{example.degree.three}
We wrote a fairly simple computer program on Python whose output is all minimal balanced nonsingular $2$-complexes on $\{1,2,\ldots,6\}$.
Up to symmetry, there are only two such complexes 
\begin{align*}
&\mathcal{O}_{(i_1i_5)(i_2i_6)(i_3i_4)}=\{\{i_1, i_2, i_3\}, \{i_1, i_2, i_4\}, \{i_1, i_3, i_6\}, \{i_1, i_4, i_6\}, \{i_2, i_3, i_5\}, \{i_2, i_4, i_5\}, 
\{i_3, i_5, i_6\}, \{i_4, i_5, i_6\}\},     \\
&\mathcal{C}_{(i_1i_4)(i_2i_3)(i_5i_6)}=\{\{i_1, i_2, i_3\}, \{i_1, i_2, i_4\}, \{i_1, i_3, i_4\}, \{i_1, i_4, i_5\}, \{i_1, i_4, i_6\}, \{i_1, i_5, i_6\}, \{i_2, i_3, i_4\}, \{i_2, i_3, i_5\},      \\ 
& \phantom{\mathcal{C}_{(i_1i_4)(i_2i_3)(i_5i_6)}=\{\{i_1, i_2, i_3\}, \{i_1, i_2, i_4\}, \{i_1, i_3, i_4\},  \{i_1, i_4, i_5\}, } 
\{i_2, i_3, i_6\}, \{i_2, i_5, i_6\}, \{i_3, i_5, i_6\}, \{i_4, i_5, i_6\}\},
\end{align*}
\end{example}
where $i_1,i_2,\ldots,i_6$ are the distinct elements in $\{1,2,\ldots,6\}$.
The complex $\mathcal{O}_{(i_1i_5)(i_2i_6)(i_3i_4)}$ is given by the faces of an octahedron with vertices suitably labeled (see Figure~\ref{fig-Octahedron}) and it is the product $\{i_1,i_5\}\cdot \{i_2,i_6\} \cdot \{i_3,i_4\}$, hence its associated effective divisor is not irreducible in $M(\overline{M}_{0,7})$.
The complex $\mathcal{C}_{(i_1i_4)(i_2i_3)(i_5i_6)}$ is given by the faces of three tetrahedra attached in a precise way as a cycle  (see Figure~\ref{fig-Cycle3Tetrahedra}). It is easy to see that the complex $\mathcal{C}_{(i_1i_4)(i_2i_3)(i_5i_6)}$ is nonsingular, balanceable, minimal and not a product (see \cite[4.2.4]{DGJ}). 
Hence, by Theorem~\ref{thm.simplicial.approach}, 
in the Kapranov model of $\overline{M}_{0,7}$ with respect to the seventh marked point, its associated divisor class $D_{\mathcal{C}_{(i_1i_4)(i_2i_3)(i_5i_6)}}$ is irreducible in the monoid $M(\overline{M}_{0,7})$. The divisor $D_{\mathcal{C}_{(i_1i_4)(i_2i_3)(i_5i_6)}}$ is the pullback to $\overline{M}_{0,7}$ of the unique hypertree divisor in $\overline{M}_{0,6}$. 
\begin{figure}[ht]     
\begin{center}
\begin{minipage}{0.45\linewidth}
\begin{center}
\includegraphics[height=3cm]{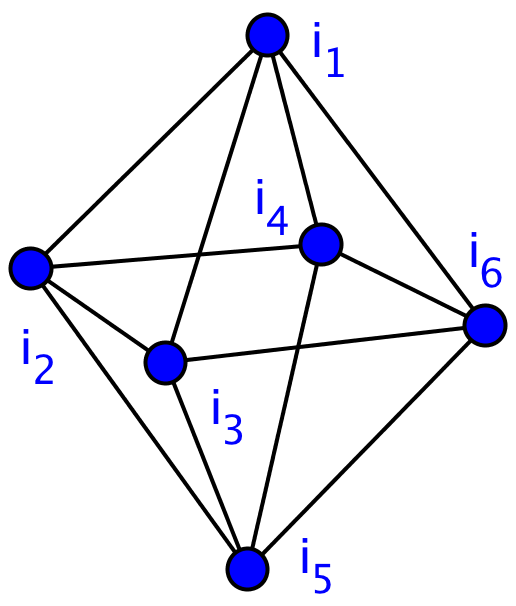} 
\end{center}
\caption{Complex $\mathcal{O}_{(i_1i_5)(i_2i_6)(i_3i_4)}$.}    \label{fig-Octahedron}
\end{minipage}
\begin{minipage}{0.45\linewidth}
\begin{center}
\includegraphics[height=3cm]{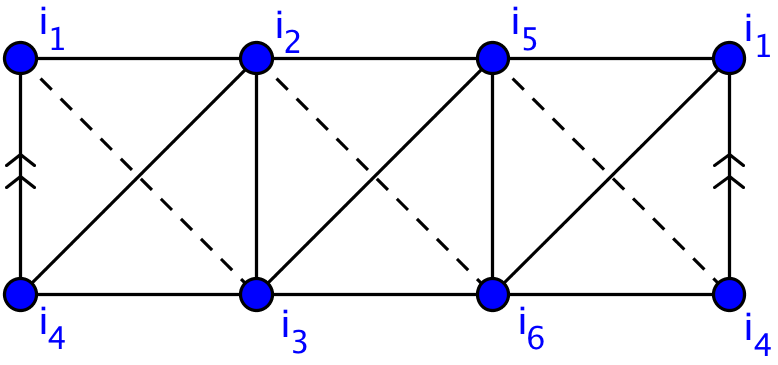}
\end{center}
\caption{Complex $\mathcal{C}_{(i_1i_4)(i_2i_3)(i_5i_6)}$.}   \label{fig-Cycle3Tetrahedra}
\end{minipage}
\end{center}
\end{figure}

\begin{example}       \label{example.nonsingular.6vertices}
By Corollary~\ref{corollary.no.nonsingular}, there are no balanceable nonsingular $d$-complexes on $6$ vertices for $d \geq 3$. 
From Examples~\ref{example.degree.one}-\ref{example.degree.three},  
we deduce that in the Kapranov model of $\overline{M}_{0,7}$ with respect to the seventh marked point,
the effective divisors that are irreducible in the monoid $M(\overline{M}_{0,7})$ and that arise from a nonsingular balanced complex are $B_{ij}$, $H_{ijkpqr}$, $T_{(i)(jk)(pq)}$, $P_{(i)(jk)(r)(pq)}$ and $\mathcal{C}_{(i_1i_4)(i_2i_3)(i_5i_6)}$ where the subindices of each complex are distinct elements of $\{1,2,\ldots,6\}$.
These are represented in Figures~\ref{fig-complexes} and \ref{fig-Cycle3Tetrahedra}.
\end{example}

\begin{example}\textbf{An effective divisor irreducible in $M(\overline{M}_{0,8})$.}
All triangulations of the real projective plane $\mathbb{RP}^2$ can be obtained by successive \emph{vertex splittings} from the triangulations $\Delta'$ and $\Delta''$ shown in Figures~\ref{fig-six-triangulation-RP2} and \ref{fig-seven-triangulation-RP2} (see \cite{TriangulationsRP2} for details). 
The triangulation $\Delta'$ is not balanceable.    
The dual graph of the triangulation $\Delta''$ is bipartite, hence as a 2-complex  $\Delta''$  is balanceable and minimal.  
Suppose that $\Delta''$ was the product of two lower degree complexes $\Delta_{0}$ and $\Delta_1$,
which we can assume have degree zero and one, respectively, and necessarily have disjoint supports.
Since the vertex $7$ is in at least one simplex with each of the other vertices, then $7 \in \operatorname{Supp}(\Delta_1)$.
By Example~\ref{link.product}, $({\Delta''})_7^* = (\Delta_0) \cdot (\Delta_1)^*_7$, but this is a contradiction since $(\Delta'')_7^*$ is a hexagon and hence not equal to the product of two $0$-complexes.
\begin{figure}[ht]     
\begin{center}
\begin{minipage}{0.45\linewidth}
\begin{center}
\includegraphics[height=3cm]{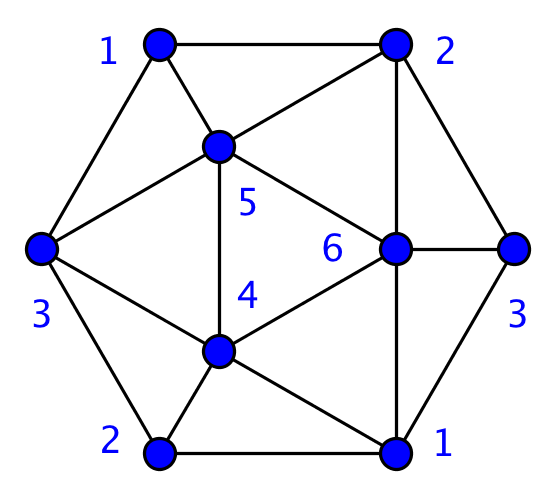} 
\end{center}
\caption{Triangulation $\Delta'$ of $\mathbb{RP}^2$.}    \label{fig-six-triangulation-RP2}
\end{minipage}
\begin{minipage}{0.45\linewidth}
\begin{center}
\includegraphics[height=3cm]{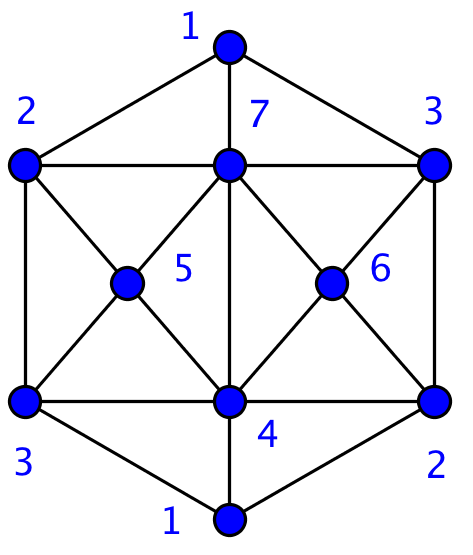}
\end{center}
\caption{Triangulation $\Delta''$ of $\mathbb{RP}^2$.}   \label{fig-seven-triangulation-RP2}
\end{minipage}
\end{center}
\end{figure}
Then, $\Delta''$ is nonsingular, balanceable, minimal and it is not a product. 
By Theorem~\ref{thm.simplicial.approach}, over a field, the associated divisor class $D_\Delta''$ given by the formula in (\ref{DGJrecipe2}), whose first terms are 
\[
D_\Delta'' = 
3H 
- \sum_{1\leq i \leq 7} 2E_{i}    
-2E_{15}-2E_{16}-2E_{56}
- \sum_{\substack{1\leq i < j \leq 7  \\  (i,j) \neq (1,5),(1,6),(5,6) }} E_{ij}
- \quad \cdots
\]
is irreducible in the monoid of effective divisors of $\overline{M}_{0,8}$ and every generating set for
$\operatorname{Cox}(\overline{M}_{0,8})$ includes its unique up to scalar associated section.
\end{example}


\begin{example}\textbf{Effective divisors irreducible in $M(\overline{M}_{0,n})$ from triangulated $d$-tori.}       \label{example.appendix}
Let $d\geq 2$ and $n_1,n_2,\ldots,n_d \geq 3$ be integers, such that $dn_1,dn_2,\ldots,dn_d$ are all even. Then, over a characteristic zero field, the divisor class $D_{n_1,n_2,\ldots,n_d}$ given by
\begin{equation}    \label{class.d.tori}
(d+1)H-\!\!\!\!\!
\sum\limits_{\substack{I \subseteq V\\1\leq |I| \leq n-4}}   \!\!
\left[     
(d+1)-\!\!\!
\max\limits_{\substack{(b_1,\ldots,b_d) \in V\\\sigma \in S_d}}
\left|
\{
(a_1,\ldots,a_d)\!\in\!I \ | \ 0\!\leq\!a_{\sigma(1)}\!-\!b_{\sigma(1)}\!\leq\!\ldots\!\leq\!a_{\sigma(d)}\!-\!b_{\sigma(d)}\!\leq\!1
\}   
\right|
\right]  E_{I}
\end{equation}
is irreducible in the monoid of effective divisors of $\overline{M}_{0,n}$, where $n=n_1n_2\cdots n_d+1$ and $V=[0,n_1-1]\times[0,n_2-1]\times \cdots \times[0,n_d-1]\cap \mathbb{Z}^d$. Moreover $h^{0}(\overline{M}_{0,n},D_{n_1,n_2,\ldots,n_d})=1$ and the unique up to scalar associated section to $D_{n_1,n_2,\ldots,n_d}$ is in any generator set of $\operatorname{Cox}(\overline{M}_{0,n})$.
\begin{figure}[ht]     
\begin{center}
\begin{minipage}{0.45\linewidth}
\begin{center}
\includegraphics[height=3cm]{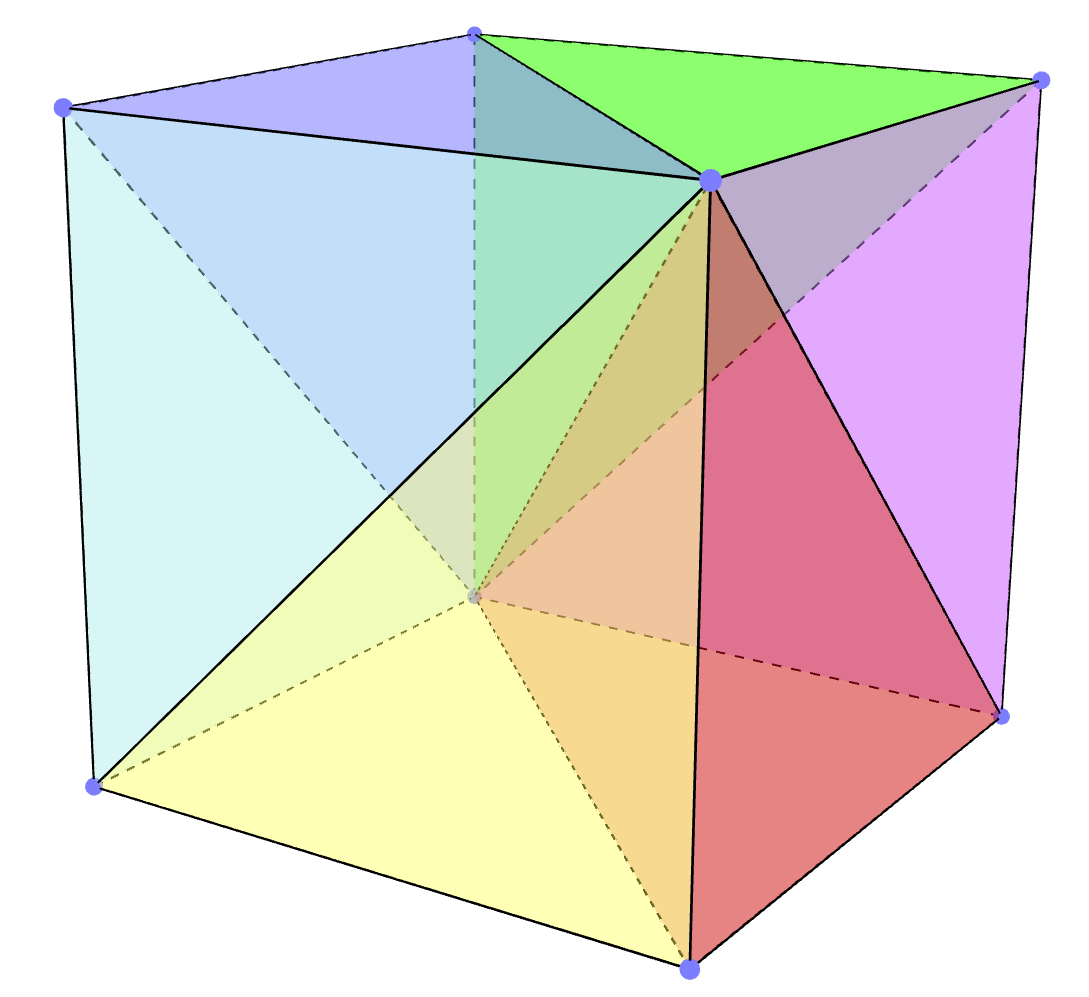} 
\end{center}
\caption{Triangulation $Q_3$.}    \label{fig-Cube-Perspective}
\end{minipage}
\begin{minipage}{0.45\linewidth}
\begin{center}
\includegraphics[height=3cm]{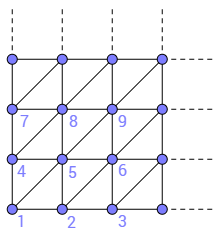}
\end{center}
\caption{Complex $\Delta_{n_1,n_2}$.}   \label{fig-Torus}
\end{minipage}
\end{center}
\end{figure}
To see this, first we consider the weighted $d$-complex $Q_d$ on the vertex set  $\{0,1\}^d \subseteq \mathbb{R}^d$ of the unit $d$-cube, 
with the $d!$ simplices $\bar{\sigma} = \{(x_1,x_2, \ldots ,x_d) \in \{0,1\}^{d}\:| \: x_{\sigma(1)}\leq x_{\sigma(2)}\leq \ldots \leq x_{\sigma(d)} \}$ weighted by $w_{\bar{\sigma}} = \operatorname{sgn}(\sigma)$, for each permutation $\sigma \in S_d$ (see Figure~\ref{fig-Cube-Perspective}).
We get an induced weighted $d$-complex on the lattice points of the hyperrectangle $H_{n_1,n_2,\ldots,n_d}=[0,n_1] \times [0,n_2] \times \ldots \times [0,n_d]$ where each unit $d$-cube with lattice vertices contained in $H_{n_1,n_2,\ldots,n_d}$ is triangulated by a translation of $Q_d$, 
and where a $d$-simplex in the triangulation of the lattice $d$-cube $[a_{1},a_{1}+1] \times [a_{2},a_{2}+1] \times \ldots \times [a_{d},a_{d}+1]$ corresponding to the permutation $\sigma \in S_{d}$ is assigned the weight $(-1)^{(a_1+a_2+\cdots+a_d)d}\operatorname{sgn}(\sigma)$.
By identifying pairs of opposite faces of $H_{n_1,n_2,\ldots,n_d}$ by translations in the direction of the coordinate axes, we get a nonsingular $d$-complex $\Delta_{n_1,n_2,\ldots,n_d}$ on $n_1 n_2 \cdots n_d$ vertices that is a triangulation of the $d$-torus (see Figure~\ref{fig-Torus}).
Each facet on $\Delta_{n_1,n_2,\ldots,n_d}$ is contained in exactly two simplices and with weights 1 and -1, so $\Delta_{n_1,n_2,\ldots,n_d}$ is nondegenerately balanced by Theorem~\ref{overall}. 
A weighting of $\Delta_{n_1,n_2,\ldots,n_d}$ with one weight equal to zero has to be identically zero, so it is minimal.
Each vertex of $\Delta_{n_1,n_2,\ldots,n_d}$ shares a simplex with exactly $2^{d+1}-2$ other vertices,
and since $\Delta_{n_1,n_2,\ldots,n_d}$ has $n_1n_2\cdots n_d$ vertices, by Proposition \ref{nonproduct} it cannot be a product, except possibly in the two cases $d=2$, $n_1=3$, $n_2=3$ and $d=2$, $n_1=3$, $n_2=4$, but it is not a product in those cases (because otherwise, by Example~\ref{link.product} we would be able to express the link of some vertex as the product of two $0$-complexes, which is impossible as these links are hexagons).
It is straightforward to see that the associated divisor class $D_{\Delta_{n_1,n_2,\ldots,n_d}}$ is given by the formula in (\ref{class.d.tori}) and then the claims follow from Theorem~\ref{thm.simplicial.approach} since $\Delta_{n_1,n_2,\ldots,n_d}$ is a nonproduct, nonsingular, minimal, balanced $d$-complex over a field.
\end{example}

\begin{remark}
The case $d =2$, $n_1 = n_2 = 3$ of the construction in Example~\ref{example.appendix} appeared previously in \cite[4.2.6]{DGJ}.
It would be interesting to study degenerate versions of this construction.
For example, the cycle of $m$ tetrahedra in \cite[4.2.2]{DGJ} is isomorphic to the degenerate case of this construction with $d=2$, $n_1=2$, $n_2 = m$.
The even length cycles in \cite[Proposition 4.6]{DGJ} correspond to a degenerate case of this construction with $d=1$, but note that for length $n=4$ this complex is a product. 
\end{remark}


\section{An effective divisor from a singular complex irreducible in $M(\overline{M}_{0,7})$}            \label{section.theorem}

Let $\mathcal{A}$ be the singular simplicial 2-complex on $\{1,2,3,4,5,6\}$ given by
\[
\mathcal{A}=\{
\{1,1,2\},\!
\{1,1,3\},\!
\{1,2,4\},\!
\{1,2,5\},\!
\{1,3,5\},\!
\{1,3,6\},\!
\{1,4,5\},\!
\{1,5,6\},\!
\{2,3,4\},\!
\{2,3,6\},\!
\{2,5,6\},\!
\{3,4,5\}
\}.
\]
The complex $\mathcal{A}$ can be visualized by identifying the vertices with the same labels and the edges between identified vertices in Figure~\ref{fig-complex-A}.
\begin{figure}[ht]     %
\begin{center}
\includegraphics[height=3.5cm]{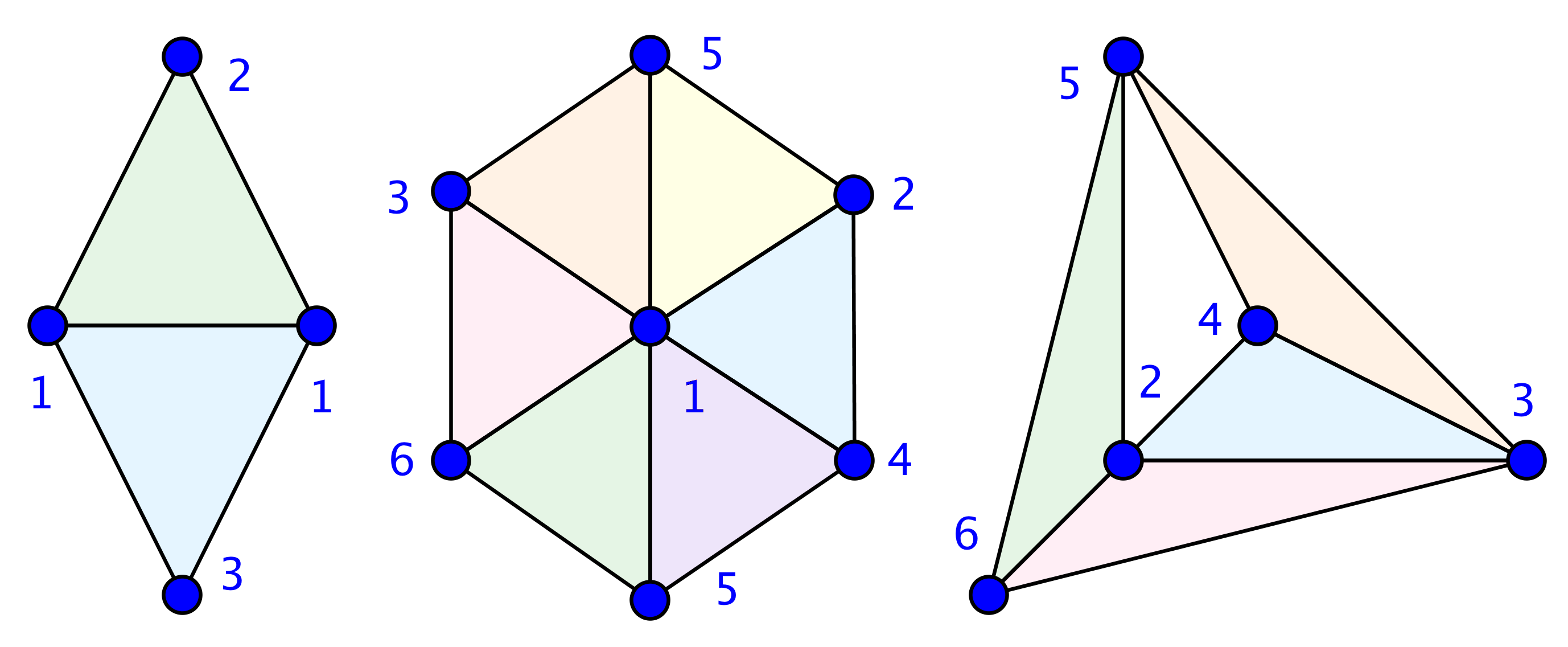} 
\caption{Simplicial 2-complex $\mathcal{A}$ on $\{1,2,3,4,5,6\}$ before vertex and edge identifications.}    \label{fig-complex-A}
\end{center}
\end{figure}

The associated divisor divisor class in $\operatorname{Pic}(\overline{M}_{0,7})$ is
\begin{align*}
D_{\mathcal{A}} = {} & 3H -E_{1}-2E_{2}-2E_{3}-2E_{4}-2E_{5}-2E_{6} 
-E_{1 4}-E_{1 5}-E_{1 6}-E_{2 3}-E_{2 4}-E_{2 5}-E_{2 6}    \\
&   -E_{3 4}-E_{3 5}-E_{3 6}-E_{4 5} -2E_{4 6}-E_{5 6} 
-E_{1 4 6}-E_{2 3 5}-E_{2 4 5}-E_{2 4 6}-E_{3 4 6}-E_{3 5 6}-E_{4 5 6}.
\end{align*}

\begin{lemma}         \label{lemma.effective}
The complex $\mathcal{A}$ is minimal over any characteristic zero field $K$ and its associated divisor $D_{\mathcal{A}}$ represents an effective class in $\overline{M}_{0,7}$.
\end{lemma}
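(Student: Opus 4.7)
The plan is to produce an explicit nondegenerate balancing on $\mathcal{A}$ defined over $\mathbb{Z}\subseteq K$ and then verify that the $K$-vector space of balancings is one-dimensional; these two facts, via Theorem~\ref{thm.simplicial.approach}(A1) and Proposition~\ref{onedimvectorspace}, yield simultaneously the effectiveness of $D_{\mathcal{A}}$ and the minimality of $\mathcal{A}$ over any characteristic zero field.

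First I would cut the workload using Theorem~\ref{overall}: since $\mathcal{A}$ is a $2$-complex, it suffices to impose the balancing condition in degree $d=2$, i.e.\ one equation for each multiset $S\subseteq\{1,\ldots,6\}$ with $|S|=2$. Labeling the twelve simplices $s_1=\{1,1,2\}, s_2=\{1,1,3\}, s_3=\{1,2,4\},\ldots,s_{12}=\{3,4,5\}$ with weights $w_1,\ldots,w_{12}$, one lists for each facet $\{i,j\}$ the simplices containing it together with the multiplicity $m(\{i,j\}\subseteq\sigma)$ from formula~(\ref{definition.multiplicity}). For all facets $\{i,j\}$ disjoint from the repeated vertex $1$, and for $\{1,4\}$ and $\{1,6\}$, exactly two simplices contribute with multiplicity $1$, giving two-term equations of the form $w_a+w_b=0$. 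The facet $\{1,5\}$ lies in the four simplices $s_4,s_5,s_7,s_8$, giving $w_4+w_5+w_7+w_8=0$. The three facets involving the singular simplices contribute the equations $w_1+w_2=0$ (from $\{1,1\}$, with $m(\{1,1\}\subseteq s_1)=m(\{1,1\}\subseteq s_2)=1$), $2w_1+w_3+w_4=0$ (from $\{1,2\}$, with $m(\{1,2\}\subseteq s_1)=2$), and $2w_2+w_5+w_6=0$ (from $\{1,3\}$).

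Next I would solve this system by propagation. Fix $w_3=1$. The two-term equations for the nonsingular facets $\{1,4\},\{1,6\},\{2,3\},\{2,4\},\{2,5\},\{2,6\},\{3,4\},\{3,5\},\{3,6\},\{4,5\},\{5,6\}$ form a connected set of linear relations among $w_3,w_4,w_5,w_6,w_7,w_8,w_9,w_{10},w_{11},w_{12}$; propagating from $w_3$ one reads off alternating signs along the cycles in the underlying edge-graph and checks that all cycle-closure relations are consistent (not over-determined). This yields a unique assignment of weights to the nonsingular simplices, all of them $\pm 1$. Substituting into the $\{1,2\}$ and $\{1,3\}$ equations forces $w_1$ and $w_2$ uniquely, and the $\{1,1\}$ equation then holds automatically; finally the $\{1,5\}$ equation reads $1-1-1+1=0$ and is satisfied. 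Thus the balancing space is one-dimensional, a generator has all weights nonzero, and minimality follows from Proposition~\ref{onedimvectorspace}. Effectiveness of $D_{\mathcal{A}}$ is then immediate from Theorem~\ref{thm.simplicial.approach}(A1), which promotes this nondegenerate balancing to a nonzero multihomogeneous Cox-ring element of class $D_{\mathcal{A}}$.

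The only real obstacle is careful bookkeeping at the singular facets $\{1,1\}, \{1,2\}, \{1,3\}$, where the binomial multiplicities $\binom{2}{2}=1$ and $\binom{2}{1}=2$ enter nontrivially, and verifying that the several independent cycles of two-term equations (for instance $w_3+w_7=0,\ w_7+w_{12}=0,\ w_{12}+w_9=0,\ w_9+w_3=0$ around the facets $\{1,4\},\{4,5\},\{3,4\},\{2,4\}$) close consistently; these checks are routine and involve only integer coefficients, so the conclusion is valid over any characteristic zero field $K$.
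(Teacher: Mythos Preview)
Your proposal is correct and follows essentially the same approach as the paper: write down the facet balancing equations (invoking Theorem~\ref{overall} to reduce to degree $2$), propagate through the two-term relations to see that the solution space is one-dimensional with a nondegenerate generator, and conclude minimality via Proposition~\ref{onedimvectorspace} and effectiveness via Theorem~\ref{thm.simplicial.approach}(A1). The paper's proof is only cosmetically different---it records the same chains of two-term equalities (e.g.\ $w(\{1,2,5\})=-w(\{2,5,6\})=w(\{1,5,6\})=\cdots$) rather than labelling the simplices $s_1,\ldots,s_{12}$, and it normalizes $w(\{1,1,2\})=1$ instead of $w_3=1$, giving the negative of your generator.
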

\begin{proof}
Suppose that $w:\mathcal{A} \rightarrow K$ is a balancing of $\mathcal{A}$. The balancing conditions on the facets imply that
\begin{align*}
&w(\{1,2,5\})=-w(\{2,5,6\})=w(\{1,5,6\})=-w(\{1,3,6\})=w(\{2,3,6\}),   \\
&w(\{1,3,5\})=-w(\{3,4,5\})=w(\{1,4,5\})=-w(\{1,2,4\})=w(\{2,3,4\}),   \\
&w(\{1,2,5\})+w(\{1,3,5\})+w(\{1,4,5\})+w(\{1,5,6\})=0,   \\
&w(\{1,1,2\})=-w(\{1,1,3\})   \textnormal{ \ and \ } 2 \cdot w(\{1,1,2\}) + w(\{1,2,4\}) + w(\{1,2,5\})=0.
\end{align*}
From these equations, we deduce that up to scalar $\mathcal{A}$ admits a unique balancing given by
\begin{align*}
&
w(\{1,1,2\})=1,
w(\{1,1,3\})=-1,
w(\{1,2,4\})=-1,
w(\{1,2,5\})=-1,
w(\{1,3,5\})=1,
w(\{1,3,6\})=1, 
   \\
&   
w(\{1,4,5\})=1,  
w(\{1,5,6\})=-1,
w(\{2,3,4\})=1,
w(\{2,3,6\})=-1,
w(\{2,5,6\})=1,
w(\{3,4,5\})=-1.
\end{align*}
Therefore $\mathcal{A}$ is minimal as this unique, up to scalar, balancing is not degenerate. The associated class $D_{\mathcal{A}}$ is effective in $\overline{M}_{0,7}$ by  Theorem~\ref{thm.simplicial.approach}.  
\end{proof}




We now show that $D_{\mathcal{A}}$ is an irreducible element in the of the monoid of effective divisors of $\overline{M}_{0,7}$.

\begin{theorem}[See Remark~\ref{remark.credit}]                         \label{theorem.hypertree}
Over any characteristic zero field $K$ the effective divisor class
\begin{align}   \label{divisor.class}
D_{\mathcal{A}} = {} & 3H -E_{1}-2E_{2}-2E_{3}-2E_{4}-2E_{5}-2E_{6} 
-E_{1 4}-E_{1 5}-E_{1 6}-E_{2 3}-E_{2 4}-E_{2 5}-E_{2 6}   \nonumber    \\
&   -E_{3 4}-E_{3 5}-E_{3 6}-E_{4 5} -2E_{4 6}-E_{5 6} 
-E_{1 4 6}-E_{2 3 5}-E_{2 4 5}-E_{2 4 6}-E_{3 4 6}-E_{3 5 6}-E_{4 5 6}
\end{align}   
in $\operatorname{Pic}(\overline{M}_{0,7})$ is irreducible in the of the monoid of effective divisors of $\overline{M}_{0,7}$.
Moreover,  $h^{0}(\overline{M}_{0,7},D_{\mathcal{A}})=1$ and its unique up to scalar nonzero section is part of any generating set of $\operatorname{Cox}(\overline{M}_{0,7})$.
\end{theorem}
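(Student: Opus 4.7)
The plan is to first establish $h^{0}(\overline{M}_{0,7}, D_{\mathcal{A}}) = 1$ and then use this to reduce the irreducibility of $D_{\mathcal{A}}$ in $M(\overline{M}_{0,7})$ to showing that the balancing polynomial $P := P(\mathcal{A}, w)$ admits no factorization as a product of a balanced linear and a balanced quadratic polynomial in $K[x_1, \ldots, x_6]$.

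For the uniqueness of sections, Lemma~\ref{lemma.effective} together with Theorem~\ref{thm.simplicial.approach}(A1) produces a nonzero section $s_{\mathcal{A}} \in H^{0}(\overline{M}_{0,7}, D_{\mathcal{A}})$ not divisible by any exceptional section. Given any nonzero $s \in H^{0}(\overline{M}_{0,7}, D_{\mathcal{A}})$, extract the maximum power of each $E_I$ to write $s = \bigl(\prod_I E_I^{a_I}\bigr) \cdot s_{\Delta}$ with $a_I \geq 0$ and $s_{\Delta}$ not divisible by any $E_I$; Theorem~\ref{thm.simplicial.approach}(A1) attaches to $s_{\Delta}$ a nondegenerately balanced $2$-complex $\Delta$ on $\{1, \ldots, 6\}$, and the class equation $D_{\Delta} + \sum_I a_I E_I = D_{\mathcal{A}}$ becomes, for every $I \subseteq \{1, \ldots, 6\}$ with $1 \leq |I| \leq 3$,
\[
a_I = \max_{\sigma \in \Delta} \sum_{i \in I} m(i \in \sigma) - \max_{\sigma \in \mathcal{A}} \sum_{i \in I} m(i \in \sigma) \geq 0.
\]
I would then run through these inequalities to restrict $\Delta$: the $|I|=2$ bounds at $\{1,2\}$ and $\{1,3\}$ force $\Delta$ to contain a simplex with support in $\{1,2\}$ and one with support in $\{1,3\}$ (hence essentially $\{1,1,2\}$ and $\{1,1,3\}$, after excluding $\{1,2,2\}, \{2,2,2\}, \{1,3,3\}, \{3,3,3\}$ via the $|I|=1$ bounds at $i=2,3$ and handling $\{1,1,1\}$ by a balancing argument); the $|I|=2$ bound at $\{4,6\}$ forbids any simplex containing both $4$ and $6$; the $|I|=3$ bounds at the seven triples with $\max_{\mathcal{A}} = 2$ (those indexed by $E_{146}, E_{235}, E_{245}, E_{246}, E_{346}, E_{356}, E_{456}$ in (\ref{divisor.class})) forbid $\Delta$ from having any simplex with support inside those triples. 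Together with Proposition~\ref{onedimvectorspace} applied to each surviving candidate (as in Lemma~\ref{lemma.effective}), these constraints pin down $\Delta = \mathcal{A}$ with $a_I = 0$ for all $I$, whence $s$ is a scalar multiple of $s_{\mathcal{A}}$ and $h^{0}(\overline{M}_{0,7}, D_{\mathcal{A}}) = 1$.

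Now suppose $D_{\mathcal{A}} = D_1 + D_2$ with $D_i \in M(\overline{M}_{0,7})$ nonzero, and pick nonzero $s_i \in H^{0}(\overline{M}_{0,7}, D_i)$. Then $s_1 s_2$ is a nonzero section of $D_{\mathcal{A}}$, hence a scalar multiple of $s_{\mathcal{A}}$. Extracting exceptional factors from each, $s_i = \bigl(\prod_I E_I^{a_{I,i}}\bigr) \cdot s_i'$ with $s_i'$ not divisible by any $E_I$; since $s_{\mathcal{A}}$ is not divisible by any exceptional section, $a_{I,i} = 0$ for all $I, i$. If some $s_i'$ were constant then the corresponding $D_i$ would vanish, contradicting $D_i \neq 0$; thus both $s_1', s_2'$ are nonconstant homogeneous polynomials and $P = \lambda \cdot s_1' s_2'$ in $K[x_1, \ldots, x_6]$ for some nonzero scalar $\lambda$. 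Up to rescaling, $P = L \cdot Q$ with $\deg L = 1$, $\deg Q = 2$, and by Theorem~\ref{thm.simplicial.approach}(A1) both $L$ and $Q$ are balancing polynomials of nondegenerately balanced complexes; in particular $L = \sum_{i=1}^{6} \alpha_i x_i$ with $\sum_i \alpha_i = 0$ and at least two of the $\alpha_i$ nonzero.

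To rule out such a factorization, I would view $P$ as a polynomial in $x_1$,
\[
P = (x_2 - x_3)\, x_1^2 + A(x_2, \ldots, x_6)\, x_1 + B(x_2, \ldots, x_6),
\]
and split on whether $\alpha_1 = 0$. If $\alpha_1 = 0$, then $L \in K[x_2, \ldots, x_6]$ divides the leading coefficient $x_2 - x_3$, forcing $L \propto x_2 - x_3$; but the substitution $x_3 = x_2$ renders $A$ nonzero by direct computation, contradicting $L \mid A$. If $\alpha_1 \neq 0$, rescale to $L = x_1 + L_0$ with $L_0 \in K[x_2, \ldots, x_6]$ linear and $\sum_{i=2}^{6} \alpha_i = -1$; the identity $P(-L_0, x_2, \ldots, x_6) \equiv 0$ expands as $(x_2 - x_3) L_0^2 - A \cdot L_0 + B = 0$, and matching coefficients of each degree-three monomial in $x_2, \ldots, x_6$ yields a system of polynomial equations in $\alpha_2, \ldots, \alpha_6$ which, together with the balance constraint, admits no solution. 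The main obstacle will be this last subcase: the expansion of $(x_2 - x_3) L_0^2$ is dense, and the cleanest route is likely a sequence of well-chosen specializations $x_j = 0$ that decouple subfamilies of the resulting equations, combined with an a priori restriction on the support of $L_0$ obtained from which monomials are present or absent in $P$. Once the factorization is excluded, $D_{\mathcal{A}}$ is irreducible in $M(\overline{M}_{0,7})$; as $s_{\mathcal{A}}$ is (up to scalar) the unique section of a nonzero irreducible class, it cannot be expressed as a polynomial in sections of strictly smaller effective classes, and therefore appears in every generating set of $\operatorname{Cox}(\overline{M}_{0,7})$.
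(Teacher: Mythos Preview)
Your outline has the right overall architecture, but there is a concrete error in the $h^{0}=1$ argument and a missing step that the paper has to work for.

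First, your displayed formula for $a_I$ has the wrong sign: from $D_\Delta + \sum_I a_I E_I = D_{\mathcal{A}}$ one obtains
\[
a_I \;=\; \max_{\sigma\in\mathcal{A}}\sum_{i\in I} m(i\in\sigma)\;-\;\max_{\sigma\in\Delta}\sum_{i\in I} m(i\in\sigma),
\]
not the reverse. The correct inequality $\max_\Delta(I)\le \max_{\mathcal{A}}(I)$ is purely an \emph{upper-bound} constraint on $\Delta$: it can only forbid simplices, never force their presence. So your claim that the $|I|=2$ bounds at $\{1,2\}$ and $\{1,3\}$ ``force $\Delta$ to contain a simplex with support in $\{1,2\}$'' is not available; the ``forbids'' claims you make happen to be correct, but they come from the inequality opposite to the one you wrote, so your argument is internally inconsistent.

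Second, once the sign is fixed and all upper-bound constraints are imposed, you do \emph{not} arrive at $\Delta\subseteq\mathcal{A}$: three further nonsingular simplices $\{1,2,3\}$, $\{1,2,6\}$, $\{1,3,4\}$ survive every divisor-coefficient constraint (they are not among the seven forbidden triples, and nothing in $|I|\le 2$ excludes them). The paper eliminates each of these by exhibiting an explicit $\mathbb{Z}$-linear combination of the facet balancing equations that isolates its weight; e.g.\ adding the $\{1,6\}$ and $\{2,6\}$ relations and subtracting the $\{3,6\}$ and $\{5,6\}$ relations yields $2\,w(\{1,2,6\})=0$. Only after this does minimality of $\mathcal{A}$ force $\Delta=\mathcal{A}$ and all $a_I=0$. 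Your sketch skips exactly this step.

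Your route to irreducibility, on the other hand, is genuinely different from the paper's. The paper never factors the cubic $P(\mathcal{A},w)$. Instead, having shown in Step~1 that no $E_I$ can occur among the irreducible summands $D_i$, it knows each $D_i$ has $H$-degree $1$ or $2$; those irreducibles are completely classified in Examples~\ref{example.degree.one}--\ref{example.degree.two}, and a single arithmetic check---the sum of the $E_I$ coefficients over $|I|=3$ on $D_{\mathcal{A}}$ equals $-7$, whereas $B{+}B{+}B$, $B{+}H$, $B{+}T$, $B{+}P$ give $-12,-6,-8,-4$---rules out every candidate decomposition at once. Your polynomial-factorisation plan is valid in principle (the bijection of Theorem~\ref{thm.simplicial.approach}(A1) is multiplicative, so $s_1's_2'\propto s_{\mathcal{A}}$ really does become $P=L\cdot Q$), and your $\alpha_1=0$ case goes through, but the $\alpha_1\neq 0$ case you flag as ``the main obstacle'' is a real obstacle: solving $(x_2-x_3)L_0^2 - A\,L_0 + B=0$ in $\alpha_2,\ldots,\alpha_6$ is substantially more work than the paper's one-line coefficient comparison. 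What the paper's approach buys is a clean inductive template---irreducibility in degree $d{+}1$ reduces to the classification of irreducibles in degrees $\le d$---at the price of needing that classification as input.
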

\begin{proof}
Let us write $D_{\mathcal{A}} = D_1+D_2+\ldots+D_r$ where each $D_{i}$ is an effective class irreducible in $M(\overline{M}_{0,7})$.

\textbf{Step 1.} \emph{We show that for any $I \subseteq \{1,2,3,4,5,6\}$ with $1\leq |I| \leq 3$ the divisor $D_{\mathcal{A}} - E_{I}$ does not represent an effective class in $\overline{M}_{0,7}$.}
The pseudoeffective cone of the projective variety $\overline{M}_{0,7}$ is pointed, therefore we can choose an effective integral combination of exceptional divisors $E$ such that $D = D_{\mathcal{A}} - E$ is effective, but $D - E_{I}$ is not effective for any $I \subseteq \{1,2,3,4,5,6\}$ with $1\leq |I| \leq 3$.
By Theorem~\ref{thm.simplicial.approach} there exists a $2$-complex $\Delta$ on $\{1,2,3,4,5,6\}$ such that $D_{\Delta}=D$, where $D_{\Delta}$ is the divisor associated to $\Delta$ according to the formula in (\ref{DGJrecipe2}).
Let us write $D_{\mathcal{A}}= 3H - \sum_{I}a_{I}E_{I}$ and $D_{\Delta}= 3H - \sum_{I}b_{I}E_{I}$. Hence, for fixed $I$ and $\sigma \in \Delta$ we have
\[
a_I \leq  b_I   =  3 - \max_{\tau \in \Delta} \Bigg\{ \sum_{i\in I} m(i \in \tau)\Bigg\}    \leq    3 - \sum_{i\in I} m(i \in \sigma).
  \]
Therefore, any $\sigma \in \Delta$ satisfies
$
\sum_{i\in I} m(i \in \sigma)   \leq   3 - a_I,
$
for all $I$.
Hence the nonsingular simplices $\{1,4,6\}$, $\{2,3,5\}$, $\{2,4,5\}$, $\{2,4,6\}$, $\{3,4,6\}$, $\{3,5,6\}$ and $\{4,5,6\}$ cannot be in $\Delta$. Similarly, the only singular simplices that could be in $\Delta$ are $\{1,1,2\}$ and $\{1,1,3\}$.
Then $\Delta \subseteq \tilde{\Delta} :=\mathcal{A} \cup  \{\{1,2,3\}, \{1,2,6\}, \{1,3,4\}\}$.
By Theorem~\ref{thm.simplicial.approach} there exists a possibly degenerate balancing $w: \tilde{\Delta} \rightarrow K$, which becomes a nondegenerate balancing when restricted to $\Delta$. 
If we add the balancing conditions on $\tilde{\Delta}$ for $\{1,6\}$ and $\{2,6\}$, and subtract those for $\{3,6\}$ and $\{5,6\}$ we get
\begin{align*}
0= ( w(\{1,2,6\}) + &  w(\{1,3,6\}) + w(\{1,5,6\})) + (w(\{1,2,6\}) + w(\{2,3,6\}) + w(\{2,5,6\}))       \\
& - ( w (\{1,3,6\}) + w(\{2,3,6\})) - (w(\{1,5,6\}) + w(\{2,5,6\}))   =  2 \cdot w(\{1,2,6\}).  
\end{align*}
Hence $w(\{1,2,6\})=0$ and $ \{1,2,6\}   \notin \Delta$. Similarly,  if we add the balancing conditions on $\tilde{\Delta}$ for $\{1,4\}$ and $\{3,4\}$, and subtract those for $\{2,4\}$ and $\{4,5\}$ we get
\begin{align*}
0= (w(\{1,2,4\}) + &  w(\{1,3,4\}) + w(\{1,4,5\})) + (w(\{1,3,4\}) + w(\{2,3,4\}) + w(\{3,4,5\}))       \\
& - (w(\{1,2,4\}) + w(\{2,3,4\})) - (w(\{1,4,5\}) + w(\{3,4,5\}))   =  2 \cdot w(\{1,3,4\}).  
\end{align*}
Hence $w(\{1,3,4\})=0$ and $ \{1,3,4\}   \notin \Delta$. 
If we add the balancing conditions on $\tilde{\Delta}$ for $\{1,2\}$, $\{1,3 \}$, $\{2, 6 \}$ and $\{3,4\}$, 
subtract those for $\{2, 4\}$, $\{2, 5 \}$, $\{3, 5 \}$ and $\{3,6\}$, 
and subtract twice the one for $\{1,1\}$ we get
\begin{align*}
0 = ( 2 \cdot &  w(\{1,1,2\}) +  w(\{1,2,3\}) + w(\{1,2,4\}) +  w(\{1,2,5\}) +  w(\{1,2,6\}) )      \\
& + ( 2 \cdot  w(\{1,1,3\}) +  w(\{1,2,3\}) + w(\{1,3,4\}) +  w(\{1,3,5\}) +  w(\{1,3,6\}) )      \\
& + (w(\{1,2,6\}) + w(\{2,3,6\}) + w(\{2,5,6\}))    
 + ( w(\{1,3,4\}) + w(\{2,3,4\}) + w(\{3,4,5\}))      \\
&- ( w(\{1,2,4\}) +  w(\{2,3,4\})) 
- ( w(\{1,2,5\}) +  w(\{2,5,6\}) )
- (w(\{1,3,5\}) + w(\{3,4,5\}) )       \\
& - (w(\{1,3,6\}) + w(\{2,3,6\}) )
- 2 \cdot  ( w(\{1,1,2\}) +  w(\{1,1,3\})). 
\end{align*}
Hence $2 \cdot w(\{1,2,3\}) + 2 \cdot  w(\{1,2,6\}) + 2 \cdot  w(\{1,3,4\}) = 0$, and then $w(\{1,2,3\})= 0 $ and  $ \{1,2,3\}   \notin \Delta$. 
Therefore, $\Delta \subseteq \mathcal{A}$. The class $3H - \sum_{I} 3E_I$ is not effective in $\overline{M}_{0,7}$, then $\Delta$ is not empty. 
Since $\mathcal{A}$ is minimal by Lemma~\ref{lemma.effective}, we deduce that $\mathcal{A}=\Delta$ and $D_{\mathcal{A}}=D_{\Delta}=D$,
and the desired conclusion follows.

\textbf{Step 2.} \emph{We show $D_{\mathcal{A}}$ is irreducible in $M(\overline{M}_{0,7})$.} 
By contradiction, let us assume that $D_{\mathcal{A}}$ is reducible. 
Let us denote by $B_{i_1i_2}$, $H_{i_1i_2i_3i_4i_5i_6}$, $T_{(i_1)(i_2i_3)(i_4i_5)}$ and $P_{(i_1)(i_2i_3)(i_4)(i_5i_6)}$ the divisors corresponding to the corresponding complexes from Examples~\ref{example.degree.one} and \ref{example.degree.two}.
By Step 1, all divisors $D_{i}$ in the expression $D_{\mathcal{A}} = D_1+D_2+\ldots+D_r$ from above are among the degree one and two divisors described in Examples~\ref{example.degree.one} and \ref{example.degree.two}.
Therefore, one of the following three decompositions must hold
\begin{align*}
D_{\mathcal{A}}&=B_{i_1i_2}+B_{j_1j_2}+B_{k_1k_2},    \qquad \qquad
 &D_{\mathcal{A}}&=B_{i_1i_2}+H_{j_1j_2j_3j_4j_5j_6},   \\
D_{\mathcal{A}}&=B_{i_1i_2}+T_{(j_1)(j_2j_3)(j_4j_5)},    \qquad \qquad
&D_{\mathcal{A}}&=B_{i_1i_2}+P_{(j_1)(j_2j_3)(j_4)(j_5j_6)},
\end{align*}
where the subindices of each divisor are distinct elements from the set $\{1,2,3,4,5,6\}$.
However, these equations are impossible because the sum of the coefficients of the divisors $E_{I}$ with $|I|=3$ on $D_{\mathcal{A}}$ is -7, on a divisor of the form $B_{i_1i_2}$ is $-4$, 
on a divisor of the form $H_{j_1j_2j_3j_4j_5j_6}$ is $-2$, 
on a divisor of the form $T_{(j_1)(j_2j_3)(j_4j_5)}$ is $-4$,
and on a divisor of the form $P_{(j_1)(j_2j_3)(j_4)(j_5j_6)}$ is $0$.
Therefore $D_{\mathcal{A}}$ is irreducible in $M(\overline{M}_{0,7})$ as we wanted to show.

\textbf{Step 3.} \emph{We show $h^{0}(\overline{M}_{0,7},D_{\mathcal{A}})=1$ and the claim on the section.} We proceed as in \cite[Proposition 2.21]{DGJ}. A nonzero section $f \in H^{0}(\overline{M}_{0,7},D_{\mathcal{A}}) \subseteq \operatorname{Cox}()\overline{M}_{0,7}$ cannot be divisible by any exceptional divisor section by Step 1. 
By Theorem~\ref{thm.simplicial.approach}, $f$ corresponds to a nondegenerately balanced complex $\Delta$ such that $D_{\mathcal{A}}=D_{\Delta}$.
By repeating the argument in Step 1 with $E=0$, we conclude that $\mathcal{A}=\Delta$. 
Hence, the nonzero elements in $H^{0}(\overline{M}_{0,7},D_{\mathcal{A}})$ correspond to nondegenerate balancings of $\mathcal{A}$. By Lemma~\ref{lemma.effective}, we get that $h^{0}(\overline{M}_{0,7},D_{\mathcal{A}})=1$, and from the irreducibility of $D_{\mathcal{A}}$ in $M(\overline{M}_{0,7})$, its unique up to scalar nonzero section is part of any generating set of $\operatorname{Cox}(\overline{M}_{0,7})$.
\end{proof}

\begin{remark}     \label{remark.credit}
The divisor $D_{\mathcal{A}}$ studied in this section is actually the \emph{hypertree divisor} corresponding to the unique \emph{hypertree graph} on seven vertices \raisebox{-0.5mm}{{\includegraphics[angle=0,origin=c, height=\myheight]{Hypertree7}}} (see \cite{Hypertrees}).
Hence, the conclusions of Theorem~\ref{theorem.hypertree} easily follow from \cite[Theorem 1.5]{Hypertrees}.
There is a Kapranov model of $\overline{M}_{0,7}$ where $D_{\mathcal{A}}$ becomes the divisor class associated to a hexagon 
\raisebox{-\mydepth}{{\includegraphics[height=\myheight]{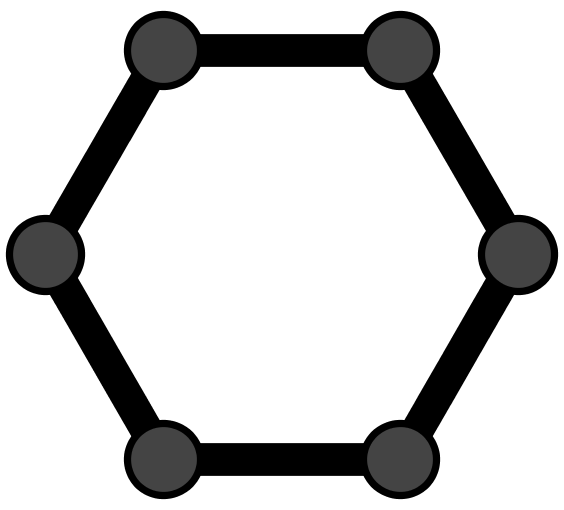}}} viewed as a nonsingular $1$-complex.
%
%
Thus, the conclusions of Theorem~\ref{theorem.hypertree} are directly given by the nonsingular case of Theorem~\ref{thm.simplicial.approach}.
We believe that arguments that allow to prove the irreducibility in the effective monoid of divisors arising from balanced singular complexes are 
important toward a solution to Question 1.
We expect that the ideas in this section can be used to prove irreducibility in $M(\overline{M}_{0,n})$ of effective divisors arising from balanced singular complexes,
in some cases where \cite[Theorem 1.5]{Hypertrees} and Theorem~\ref{thm.simplicial.approach} do not apply.
\end{remark}


\section{The space of balancings}   \label{section.balancings}

We define the \emph{complete $d$-complex on $n$ vertices} $\Delta_{n,d}$ as the $d$-complex whose simplices are all $d$-simplices on the set $\{1,2,\ldots, n \}$. 
Similarly, we define the \emph{complete nonsingular $d$-complex on $n$ vertices} $\Delta^{ns}_{n,d}$ as the $d$-complex whose simplices are all nonsingular $d$-simplices on the set $\{1,2,\ldots, n \}$. Note that all $d$-complexes on $n$ or fewer vertices are subcomplexes of $\Delta_{n,d}$ and all nonsingular $d$-complexes on $n$ or fewer vertices are subcomplexes of $\Delta^{ns}_{n,d}$.
A balancing on a subcomplex of $\Delta_{n,d}$ (resp. $\Delta^{ns}_{n,d}$) can be extended by zero to get a balancing of $\Delta_{n,d}$ (resp. $\Delta^{ns}_{n,d}$).
In fact, we see that the set of nonzero balancings of $\Delta_{n,d}$ (resp. $\Delta^{ns}_{n,d}$) correspond to the set of nondegenerate balancings on subcomplexes
of $\Delta_{n,d}$ (resp. $\Delta^{ns}_{n,d}$).
In this section we study the vector spaces of balancings of the complexes $\Delta_{n,d}$ and $\Delta^{ns}_{n,d}$ over a field $K$. In particular, we compute their dimensions and give explicit bases. 
These bases can be used to construct families of nondegenerate balanced complexes ad hoc and by exhaustive searches.

\begin{notation}
In this section, we will occasionally represent a given weighted $d$-complex $(\Delta,w)$ on the set $\{1,2,\ldots, n \}$ as the formal linear combination of the simplices in $\Delta_{n,d}$ given by  
$\sum_{\sigma \in \Delta} w(\sigma) \, \sigma$.
Given any weighted $d$-complex $\Delta_1=(\Delta,w)$ on $\{1,2,\ldots, n \}$ and a nonnegative integer $d'$, we associate to $\Delta_1$ the linear functional on the space of weighted $d'$-complexes on $\{1,2,\ldots, n \}$ which on any such $d'$-complex $\Delta_2=(\Delta',w')$ takes the value 
\[
\Delta_1(\Delta_2):=\sum_{\sigma \in \Delta} \sum_{\sigma' \in \Delta'}  w(\sigma)   \,   w'(\sigma')   \,  m(\sigma \subseteq \sigma').
\]
 \end{notation}

\begin{definition}[The multiplicity matrix]
Let $\Delta =  \{\sigma_1, \sigma_2, \ldots, \sigma_r \}$ be a nonempty $d$-complex on $\{1,2,\ldots, n \}$ and let 
$S =  \{\tau_1, \tau_2, \ldots, \tau_l \}$ be a nonempty collection of multisets on $\{1,2,\ldots, n \}$, both ordered in lexicographic order unless specified otherwise. 
The \emph{multiplicity matrix} $M(\Delta,S)$ is the $l \times r$ matrix 
\[
  M(\Delta,S) 
=
\begin{pmatrix}
    m(\tau_1 \subseteq \sigma_1) & m(\tau_1 \subseteq \sigma_2) & \dots  & m(\tau_1 \subseteq \sigma_r) \\
    m(\tau_2 \subseteq \sigma_1) & m(\tau_2 \subseteq \sigma_2) & \dots  & m(\tau_2 \subseteq \sigma_r) \\
    \vdots & \vdots & \ddots & \vdots \\
    m(\tau_l \subseteq \sigma_1) & m(\tau_l \subseteq \sigma_2) & \dots  & m(\tau_l \subseteq \sigma_r)
\end{pmatrix}
\]
whose entry in the $i$-th row and $j$-th column is $ M(\Delta,S)_{ij} = m(\tau_i \subseteq \sigma_j) $.
If we take $S$ to be the collection of all multisets of cardinality at most $d$ on $\{1,2,\ldots, n \}$, then by definition, for any ring $R$ the kernel of the matrix $M(\Delta,S)$ in $R^r$ is the $R$-submodule of $R$-balancings of $\Delta$.
By Theorem~\ref{overall}, over any characteristic zero domain $R$, the kernel of the matrix $M(\Delta,\Delta_{n,d-1})$ in $R^r$ is precisely the $R$-submodule of $R$-balancings of $\Delta$.
\end{definition}


\subsection{A basis for the space of balancings}

\begin{proposition}
The vector space of balancings over a characteristic zero field $K$ on the complete $d$-complex on $n$ vertices has dimension $\binom{n+d-1}{d+1}$.
\end{proposition}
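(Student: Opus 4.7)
The plan is to use the polynomial reformulation of the balancing condition given right after Definition~\ref{definition.balanced} to reduce this to a short invariant-theory computation, rather than working directly with the multiplicity matrix.

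\textbf{Step 1 (polynomial translation).} The $K$-linear map
$w \mapsto P(\Delta_{n,d}, w) = \sum_{\sigma \in \Delta_{n,d}} w(\sigma)\, x^\sigma$
is an isomorphism from the space of $K$-weightings on $\Delta_{n,d}$ onto the space $V_{d+1}$ of homogeneous polynomials of degree $d+1$ in $K[x_1, \ldots, x_n]$, since the monomials $x^\sigma$ for $\sigma$ a $d$-simplex on $\{1,\ldots,n\}$ are exactly the degree $d+1$ monomials. By the reformulation of the balancing condition stated after Definition~\ref{definition.balanced}, $w$ is a balancing if and only if $P(\Delta_{n,d}, w)$ is fixed by the $K$-algebra automorphism $\phi$ of $K[x_1, \ldots, x_n, y]$ determined by $\phi(y) = y$ and $\phi(x_i) = x_i + y$.

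\textbf{Step 2 (change of variables).} I would next perform the degree-preserving linear change $z_i = x_i - x_n$ for $1 \leq i \leq n-1$ and $z_n = x_n$, which yields a $K$-algebra isomorphism $K[x_1,\ldots,x_n,y] \cong K[z_1,\ldots,z_n,y]$ preserving the total degree in the first $n$ variables. Under this change one computes $\phi(z_i) = z_i$ for $i < n$ and $\phi(z_n) = z_n + y$, so $\phi$ acts nontrivially on only a single new variable.

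\textbf{Step 3 (invariance computation and count).} Expand any $P \in V_{d+1}$ uniquely as
$$P = \sum_{k=0}^{d+1} Q_k(z_1,\ldots,z_{n-1})\, z_n^k$$
with each $Q_k$ homogeneous of degree $d+1-k$. Then
$$\phi(P)-P = \sum_{k} Q_k \bigl((z_n+y)^k - z_n^k\bigr),$$
and vanishing of this polynomial in $K[z_1,\ldots,z_n,y]$ forces $Q_k = 0$ for all $k \geq 1$. Hence the balancings of $\Delta_{n,d}$ are in $K$-linear bijection with the homogeneous polynomials of degree $d+1$ in the $n-1$ variables $z_1, \ldots, z_{n-1}$, whose space has dimension $\binom{(n-1)+(d+1)-1}{d+1} = \binom{n+d-1}{d+1}$.

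There is essentially no serious obstacle: the polynomial reformulation already established in the paper turns the balancing condition into invariance under a single translation, and the change-of-coordinates trick immediately reduces it to independence from one variable. The nontrivial work that remains (which the rest of Section~\ref{section.balancings} presumably addresses) is exhibiting an explicit \emph{simplicial} basis for this space, rather than just matching dimensions.
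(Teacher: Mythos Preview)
Your proof is correct and takes a genuinely different route from the paper. The paper argues combinatorially: it shows the rows of the multiplicity matrix $M(\Delta_{n,d},\Delta_{n,d-1})$ are linearly independent via a lexicographic argument, invokes Theorem~\ref{overall} to identify the space of balancings with the kernel of this matrix, and then computes $|\Delta_{n,d}|-|\Delta_{n,d-1}|=\binom{n+d}{d+1}-\binom{n+d-1}{d}=\binom{n+d-1}{d+1}$. Your approach instead exploits the polynomial reformulation directly, and the change of coordinates $z_i=x_i-x_n$, $z_n=x_n$ reduces invariance under $\phi$ to independence from a single variable. This is cleaner and, notably, does \emph{not} rely on Theorem~\ref{overall}; in fact your Step~3 works over any commutative ring (the downward induction on the $y$-degree uses only $\binom{k}{k}=1$), so you get the result over arbitrary base rings rather than just characteristic zero. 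A further bonus: your argument immediately produces the basis in the next proposition, since the monomials of degree $d+1$ in $z_1,\ldots,z_{n-1}$ correspond exactly (after swapping the roles of $1$ and $n$) to the product complexes $\prod_j\{\{1\},\{j\}\}^{k_j}$. The paper's approach, on the other hand, stays within the multiplicity-matrix framework used throughout Section~\ref{section.balancings} and yields the independence of the facet functionals as a byproduct.
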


\begin{proof}
For $n=1$ or $d=0$ the claim can be easily verified, so we will assume $n \geq 2$ and $d \geq 1$.
We now show that the rows of the multiplicity matrix $M(\Delta_{n,d},\Delta_{n,d-1})$ are linearly independent.
If these rows are not linearly independent, there exists a nonzero vector $(a_1,a_2,\ldots,a_l) \in K^l$ such that for each $\sigma \in \Delta_{n,d}$ we have
\begin{equation}   \label{eqn.combination}
a_1 m(S_1 \subseteq \sigma)+a_2m(S_2 \subseteq \sigma)+\cdots+a_lm(S_l \subseteq \sigma)=0.
\end{equation}
where $S_1,S_2,\ldots,S_l$ are all the multisets of size $d$ of $\{1,2,\ldots, n \}$ ordered lexicographically.
Let $1 \leq i_0 \leq l$ be the smallest positive integer such that $a_{i_0}$ is not zero.
We have that no facet of the $d$-simplex $\sigma= S_{i_0} \uplus \{ 1 \}$ comes after $S_{i_0}$ in lexicographic order.
Then from (\ref{eqn.combination}), it follows that $a_{i_0} m(S_{i_0} \subseteq S_{i_0} \uplus \{ 1 \}) =0$. Hence $a_{i_0}=0$, which is a contradiction.
Since the rows of $M(\Delta_{n,d},\Delta_{n,d-1})$ are linearly independent, the claim now follows by computing the dimension of its kernel as 
 $|\Delta_{n,d}|-|\Delta_{n,d-1}| = \binom{n+d}{d+1} - \binom{n+d-1}{d} = \binom{n+d-1}{d+1}$.
\end{proof}

\begin{proposition}  A basis for the vector space of balancings over a characteristic zero field on the complete $d$-complex on $n$ vertices is given by all balanced complexes of the form  $\prod_{j=2}^{n} \{\{1\},\{j\}\}^{k_j}$, where each $k_j \in \mathbb{Z}_{\geq 0}$, $\sum {k_j}=d+1$, with the product balancing induced by balancing $\{\{1\},\{j\}\}$ with weights $w(\{1\})=1$, $w(\{j\})=-1$ .    
\end{proposition}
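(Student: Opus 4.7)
The plan is to combine three ingredients already in hand: the product construction of balancings from Example~\ref{example.balancing.product}, the polynomial correspondence between weighted $d$-complexes on $\{1,\ldots,n\}$ and degree $d+1$ homogeneous polynomials in $K[x_1,\ldots,x_n]$ from Section~\ref{section.complexes}, and the dimension count $\binom{n+d-1}{d+1}$ for the space of balancings of $\Delta_{n,d}$ established in the preceding proposition. Since the dimension is known, it suffices to produce a linearly independent collection of balancings of the correct size.

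First I would verify that each proposed complex actually gives a balancing. For each $j\in\{2,\ldots,n\}$, the $0$-complex $\{\{1\},\{j\}\}$ with the weights $w(\{1\})=1$, $w(\{j\})=-1$ is balanced (the sole condition is $w(\{1\})+w(\{j\})=0$), so by Example~\ref{example.balancing.product} the product $\prod_{j=2}^n\{\{1\},\{j\}\}^{k_j}$ inherits a balancing as a weighted $d$-complex on $\{1,\ldots,n\}$, which extends by zero to a balancing of $\Delta_{n,d}$. The number of such products equals the number of tuples $(k_2,\ldots,k_n)\in\mathbb{Z}_{\geq 0}^{n-1}$ with $\sum_{j=2}^n k_j=d+1$, which by stars and bars is $\binom{(d+1)+(n-1)-1}{n-2}=\binom{n+d-1}{d+1}$, matching the dimension exactly.

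The main (and only substantive) step is linear independence, which I would establish by passing through the polynomial correspondence. Since the assignment $(\Delta,w)\mapsto P(\Delta,w)$ carries products of weighted complexes to products of polynomials, the factor $\{\{1\},\{j\}\}$ with the chosen weights corresponds to $x_1-x_j$, and the product $\prod_{j=2}^n\{\{1\},\{j\}\}^{k_j}$ corresponds to $\prod_{j=2}^n(x_1-x_j)^{k_j}$. Performing the invertible linear change of variables $y_1=x_1$, $y_j=x_1-x_j$ for $j\geq 2$ (whose inverse is $x_1=y_1$, $x_j=y_1-y_j$), these polynomials become the distinct monomials $\prod_{j=2}^n y_j^{k_j}$ of total degree $d+1$ in $K[y_2,\ldots,y_n]$, which are $K$-linearly independent. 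Hence the corresponding weighted complexes are linearly independent balancings of $\Delta_{n,d}$, and since their count equals the dimension of the balancing space, they form a basis. There is no real obstacle here beyond correctly identifying the change of variables that turns $\prod(x_1-x_j)^{k_j}$ into a standard monomial basis.
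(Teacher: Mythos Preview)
Your proof is correct. Both your argument and the paper's reduce to showing linear independence after establishing (via Example~\ref{example.balancing.product} and the previous dimension count) that the proposed set has the right cardinality, but the mechanisms differ. The paper argues combinatorially: in any nonzero linear combination, the lexicographically last $d$-simplex with nonzero coefficient is $\{2^{k_2},3^{k_3},\ldots,n^{k_n}\}$, which appears (with weight $(-1)^{d+1}$) in exactly one of the proposed products, so no nontrivial relation can hold. You instead pass through the polynomial correspondence $P(\Delta,w)$, observe that the products become $\prod_{j\geq 2}(x_1-x_j)^{k_j}$, and apply the invertible substitution $y_j=x_1-x_j$ to turn these into distinct monomials. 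Your route is slightly more conceptual and ties back naturally to the $\phi$-invariant polynomial description of balancings in Section~\ref{section.complexes}; the paper's route is more hands-on and avoids invoking the polynomial dictionary altogether. In effect the two arguments are dual: the paper's ``leading simplex'' $\{2^{k_2},\ldots,n^{k_n}\}$ is exactly the monomial $\prod y_j^{k_j}$ you isolate after the change of variables.
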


\begin{proof}
The proposed basis indeed consists of balanced complexes by Example~\ref{example.balancing.product} and it has the right cardinality $\binom{n+d-1}{d+1}$, so it suffices to show its linear independence.
Given a nonzero linear combination of these weighted complexes, considered as elements of the vector space of balancings, 
the last $d$-simplex in lexicographic order having a nonzero coefficient occurs in exactly one of the proposed elements. Hence, the linear combination cannot be equal to zero and their linear independence follows.    
\end{proof}


\subsection{A basis for the space of nonsingular balancings}

\begin{proposition}\label{nonsingdimprop}
The vector space of balancings over a characteristic zero field $K$ on the complete nonsingular $d$-complex on $n$ vertices $\Delta^{ns}_{n,d}$ has dimension $\operatorname{max}\{\binom{n}{d+1} - \binom{n}{d},0\}$.
\end{proposition}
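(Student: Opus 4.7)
The plan is to reduce the problem to a linear algebra problem on $K$-vector spaces indexed by subsets of $[n]$, and then invoke an $\mathfrak{sl}_2$-type commutator identity (the Lefschetz property of the Boolean lattice) to compute the rank of the relevant incidence operator.

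First I would apply Theorem~\ref{overall} to reduce balancedness on $\Delta^{ns}_{n,d}$ to the facet balancing conditions, one for each $d$-multiset $T$ on $\{1,\ldots,n\}$. If $T$ is singular, no nonsingular $(d+1)$-simplex contains $T$ as a multiset, so the condition is vacuous; if $T$ is a $d$-subset, the condition reads $\sum_{T\subset\sigma}w(\sigma)=0$. Setting $V_k=K^{\binom{[n]}{k}}$ and identifying a weighting with an element of $V_{d+1}$, the space of balancings is precisely the kernel of the "down" operator $D:V_{d+1}\to V_d$ given by $D(\sigma)=\sum_{T\subset\sigma,\,|T|=d}T$. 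Thus the sought dimension equals $\binom{n}{d+1}-\operatorname{rank}(D)$.

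Next I would equip each $V_k$ with the standard inner product making the basis of $k$-subsets orthonormal and introduce the adjoint "up" operator $U:V_k\to V_{k+1}$, $U(T)=\sum_{\sigma\supset T,\,|\sigma|=k+1}\sigma$. A routine double-counting argument (fixing $T$, evaluating $DU(T)$ and $UD(T)$, and observing that the cross terms corresponding to $T'$ with $|T\triangle T'|=2$ cancel) yields the commutator identity
\[
DU-UD=(n-2k)\,\mathrm{Id}\quad\text{on } V_k.
\]
From this, for any nonzero $x\in V_k$,
\[
\|Ux\|^2-\|Dx\|^2=\langle x,(DU-UD)x\rangle=(n-2k)\|x\|^2,
\]
so $U|_{V_k}$ is injective when $k<n/2$, and $D|_{V_k}$ is injective when $k>n/2$. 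Taking adjoints, $D:V_{k+1}\to V_k$ is surjective when $k<n/2$.

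Applying this to our map $D:V_{d+1}\to V_d$: surjectivity holds when $d<n/2$, i.e.\ when $\binom{n}{d+1}\geq\binom{n}{d}$, giving $\operatorname{rank}(D)=\binom{n}{d}$; injectivity holds when $d+1>n/2$, i.e.\ when $\binom{n}{d}\geq\binom{n}{d+1}$, giving $\operatorname{rank}(D)=\binom{n}{d+1}$. Since these two ranges cover all $n,d$ (they overlap exactly at $n=2d+1$, where both conclusions agree because $\binom{n}{d}=\binom{n}{d+1}$), we conclude $\operatorname{rank}(D)=\min\{\binom{n}{d},\binom{n}{d+1}\}$, and hence
\[
\dim\ker(D)=\binom{n}{d+1}-\min\!\left\{\binom{n}{d},\binom{n}{d+1}\right\}=\max\!\left\{\binom{n}{d+1}-\binom{n}{d},\,0\right\}.
\]
The hard part of the argument is really just verifying the commutator identity $[D,U]=(n-2k)\mathrm{Id}$, which is purely combinatorial; everything else is elementary linear algebra, and the characteristic zero hypothesis is used only to guarantee that the inner product on $V_k$ is positive definite so the positivity argument for $\|Ux\|^2>0$ and $\|Dx\|^2>0$ goes through.
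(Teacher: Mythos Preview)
Your approach is correct and genuinely different from the paper's. The paper does not invoke the $\mathfrak{sl}_2$/Lefschetz structure on the Boolean lattice; instead, for $2d+1 \geq n$ it fixes $\sigma_0 \in \Delta^{ns}_{n,d}$, partitions both $\Delta^{ns}_{n,d}$ and $\Delta^{ns}_{n,d-1}$ by the size of the intersection with $\sigma_0$, and observes that the resulting ``averaged'' incidence matrix between these partitions is upper triangular with nonzero diagonal, so each $\sigma_0$ is separated by some combination of facet-functionals and the kernel is zero. For $n \geq 2d+1$ it reduces to the previous case by set-complementation in $\{1,\ldots,n\}$, which swaps the roles of simplices and facets and shows the facet-functionals are linearly independent. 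Your route is slicker and situates the result in a standard framework, while the paper's bare-hands argument avoids any appeal to inner products or representation theory and makes the symmetry around $n=2d+1$ combinatorially explicit via complementation.

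One caveat: your positivity step ``$\|x\|^2>0$ for $x\neq 0$'' needs an \emph{ordered} field, not merely characteristic zero; over $K=\mathbb{Q}(i)$ or $K=\mathbb{C}$ (with the bilinear, not Hermitian, pairing) the form $\sum x_i^2$ is isotropic and the inequality fails. The fix is immediate: run the argument over $\mathbb{Q}$, where it is valid as written, and then note that since $D$ is an integer matrix its rank is unchanged under extension to any field containing $\mathbb{Q}$. Alternatively, replace the positivity step by the usual $\mathfrak{sl}_2$ argument (complete reducibility in characteristic zero gives injectivity of $U$ below the middle degree and of $D$ above it) without any recourse to an inner product. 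Either way your claim about where the characteristic-zero hypothesis enters should be rephrased accordingly.
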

\begin{proof} 
Note that in the nonsingular case the simplices are sets. 
It is straightforward to verify the statement holds in the case $d=0$, where the dimension is $n-1$, and in the case $n \leq d+1$, where the dimension is zero.
Then, from here on we assume that $d \geq 1$ and $n \geq d+2$.
Let us suppose first that $\binom{n}{d+1} - \binom{n}{d} \leq 0$, i.e. we assume that $2d+1 \geq n$. 
We claim that for each $\sigma_{0} \in \Delta^{ns}_{n,d}$ there exist constants $a_{\sigma_{0},\tau}\in K$ for all $\tau \in \Delta^{ns}_{n,d-1}$, such that
\[
\sum_{\tau \in \Delta^{ns}_{n,d-1}} a_{\sigma_{0},\tau} \,  \tau(\sigma)= 
\begin{cases}
1 &\textnormal{if $\sigma =\sigma_{0} $}, \\
0 &\textnormal{if $\sigma \in \Delta^{ns}_{n,d} \smallsetminus \{\sigma_{0} \} $}.
\end{cases}
\]
To prove the claim fix $\sigma_{0} \in \Delta^{ns}_{n,d}$, and for each $1 \leq j \leq n-d$ we define
\[
A_j=\{\sigma \in \Delta^{ns}_{n,d} : | \sigma_{0} \smallsetminus \sigma| = j-1 \}
\]
and for each $1 \leq i \leq n-d$ we define 
\[
B_i=\{\tau \in \Delta^{ns}_{n,d-1} : | \sigma_{0} \smallsetminus \tau| = i \}
\]
and we set $f_{i}=\sum_{\tau \in B_i} \tau$.
Note that $2d-n+2 \leq | \sigma \cap \sigma_{0}| \leq d+1$ and $2d-n+1 \leq | \tau \cap \sigma_{0}| \leq d$ for any $\sigma \in \Delta^{ns}_{n,d}$ and $\tau \in \Delta^{ns}_{n,d-1}$.
Therefore, the sets $A_j$ and $B_i$ for $1 \leq i,j \leq n-d$ give partitions $\Delta^{ns}_{n,d}=\sqcup A_j$ and $\Delta^{ns}_{n,d-1}=\sqcup B_i$, according to the size of the intersection that each simplex has with $\sigma_{0}$.
By symmetry each $f_i$ takes a constant value $c_{ij}$ on all the elements of any particular set $A_{j}$. The definition of the $A_{j}$ and $B_i$ implies that the square matrix $C=(c_{ij})_{1\leq i,j\leq n-d}$ is upper triangular with nonzero entries on the diagonal. 
Thus, there exists a linear combination $f$ of the $f_{i}$ such that $f(\sigma) = 1$ if $\sigma \in A_1$, and $f(\sigma)=0$ if $\sigma \in A_j$, for any $j>1$. Since $A_1=\{\sigma_{0}\}$, the claim is proved.
The claim implies that in this case the only balancing on the complete nonsingular $d$-complex is identically zero as desired. 
Now let us suppose that $\binom{n}{d+1} - \binom{n}{d} \geq 0$, i.e. we assume that $n \geq 2d +1$.             
Note that by taking complements in $\{1,2,\ldots,n \}$, any simplices $\sigma \in \Delta^{ns}_{n,d}$ and $\tau \in \Delta^{ns}_{n,d-1}$ give us simplices $\sigma^{c} \in \Delta^{ns}_{n,n-d-2}$ and $\tau^{c} \in \Delta^{ns}_{n,n-d-1}$. 
Also, note that $d':=n-d+1$ satisfies the inequalities on the degree assumed for the claim above, namely, $d' \geq 1$ and $2d'+1 \geq n \geq d'+2$.
Using that $\sigma^{c}(\tau^{c})=\tau(\sigma)$, it follows from the claim applied to the simplices in $\Delta^{ns}_{n,n-d-1}$ that for each $\tau_0 \in \Delta^{ns}_{n,d-1}$ there exist constants $b_{\sigma,\tau_{0}}\in K$ for all $\sigma \in \Delta^{ns}_{n,d}$, such that
\[
\sum_{\sigma \in \Delta^{ns}_{n,d}}b_{\sigma,\tau_0}\tau(\sigma)= 
\begin{cases}
1 &\textnormal{if $\tau =\tau_{0} $}, \\
0 &\textnormal{if $\tau \in \Delta^{ns}_{n,d-1} \smallsetminus \{\tau_{0} \} $}.
\end{cases}
\]
This proves the linear independence of the $\tau \in \Delta^{ns}_{n,d-1}$ as linear functionals on the vector space of formal linear combinations of the elements of $\Delta^{ns}_{n,d}$. Therefore the dimension of the vector space of balancings on the complete nonsingular $d$-complex is $|\Delta^{ns}_{n,d}|-|\Delta^{ns}_{n,d-1}|=\binom{n}{d+1} - \binom{n}{d}$, as desired.
\end{proof}

\begin{corollary}  \label{corollary.no.nonsingular}
Over a characteristic zero field there are no nonempty nonsingular balanceable $d$-complexes on $n$ vertices when $d+1 > \frac{n}{2}$.
\end{corollary}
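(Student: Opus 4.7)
The plan is to deduce this corollary as an immediate consequence of Proposition~\ref{nonsingdimprop}, together with the extension-by-zero observation recalled at the beginning of Section~\ref{section.balancings}. First I would check that the hypothesis $d+1 > n/2$ forces the maximum in the dimension formula to equal zero. The inequality $d+1 > n/2$ is equivalent, for integers, to $n \leq 2d+1$, and under this inequality one has $\binom{n}{d+1} \leq \binom{n}{d}$: either $d+1 > n$ and the left side vanishes, or $d+1 \leq n$ and the ratio $\binom{n}{d+1}/\binom{n}{d} = (n-d)/(d+1)$ is at most $1$. Thus $\max\{\binom{n}{d+1} - \binom{n}{d},\, 0\} = 0$, and Proposition~\ref{nonsingdimprop} yields that the $K$-vector space of balancings on $\Delta^{ns}_{n,d}$ is the zero space.

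Next I would invoke the fact that any nonsingular $d$-complex on $n$ or fewer vertices is a subcomplex of $\Delta^{ns}_{n,d}$, and that any balancing of such a subcomplex extends by zero to a balancing of $\Delta^{ns}_{n,d}$. So if $\Delta$ is a nonempty nonsingular $d$-complex on $n$ vertices and $w:\Delta \to K$ is any balancing, its extension by zero to $\Delta^{ns}_{n,d}$ is forced to be the identically zero balancing by the previous paragraph, hence $w$ itself is identically zero. In particular $w$ cannot be nondegenerate on the nonempty complex $\Delta$, so $\Delta$ is not balanceable, which is the desired conclusion.

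There is essentially no obstacle in the argument, as all of the substantive work has already been carried out in Proposition~\ref{nonsingdimprop}. The only minor point requiring care is the elementary comparison of binomial coefficients under the hypothesis $d+1 > n/2$, which is the translation between the hypothesis on $d$ and the vanishing clause of the piecewise dimension formula.
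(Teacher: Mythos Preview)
Your argument is correct and is exactly the intended deduction: the paper states the corollary without proof precisely because it follows immediately from Proposition~\ref{nonsingdimprop} via extension by zero, as you have written. Your translation of $d+1 > n/2$ into $n \leq 2d+1$ and the accompanying binomial comparison are the only details to check, and you have handled them correctly.
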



\begin{proposition}
A basis for the vector space of balancings over a characteristic zero field on the complete nonsingular $d$-complex on $n$ vertices is given by all balanced complexes of the form 
$\prod_{j=1}^{d+1} \{\{a_j\},\{b_j \}\}$, for each possible choice of integers $1 \leq a_1 < a_2 < \cdots < a_{d+1} \leq n$ with $a_j \geq 2j$ for each $j$ and where $1 \leq b_1 < b_2 < \cdots < b_{d+1} \leq n$ are the $d+1$ smallest elements of $\{1,2,\ldots,n\} \smallsetminus \{a_1,a_2,\ldots,a_{d+1}\}$, 
with the product balancing induced by balancing each $\{\{a_j\},\{b_j \} \}$ with weights $w(\{a_j\})=1$, $w(\{b_j\})=-1$.
\end{proposition}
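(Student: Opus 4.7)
The plan is to verify three things: (i) each listed product is a balancing of $\Delta^{ns}_{n,d}$; (ii) the family has cardinality $\binom{n}{d+1}-\binom{n}{d}$, matching the dimension from Proposition~\ref{nonsingdimprop}; and (iii) the family is linearly independent. For brevity, denote by $\Delta_{(a)}$ the product $\prod_{j=1}^{d+1}\{\{a_j\},\{b_j\}\}$ with the specified weights $w(\{a_j\})=1$, $w(\{b_j\})=-1$.

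For (i), each factor $\{\{a_j\},\{b_j\}\}$ is a balanced $0$-complex, so by Example~\ref{example.balancing.product} the product $\Delta_{(a)}$ inherits a balancing; its simplices are nonsingular because $\{a_1,\ldots,a_{d+1}\}$ and $\{b_1,\ldots,b_{d+1}\}$ are disjoint and each has $d+1$ distinct elements. For (ii), among all $(d+1)$-subsets $1\leq a_1<\cdots<a_{d+1}\leq n$, the number satisfying $a_j\geq 2j$ for every $j$ equals $\binom{n}{d+1}-\binom{n}{d}$, by the standard reflection argument that bijects sequences violating the condition with arbitrary $d$-subsets of $\{1,\ldots,n\}$.

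The heart of the argument is (iii). I would order sequences $(a)$ lexicographically and consider the matrix whose rows record the weights of $\Delta_{(a)}$ on the simplices of $\Delta^{ns}_{n,d}$, restricted to the square submatrix indexed on both sides by the \emph{$a$-simplices} $\{a_1,\ldots,a_{d+1}\}$ in the same order. The diagonal entry at $(a)$ equals $\prod_j w(\{a_j\})=+1$, so it suffices to prove the following triangularity claim: if $(a)<(a')$ lexicographically, then $\{a'_1,\ldots,a'_{d+1}\}$ does not appear as a simplex of $\Delta_{(a)}$. This makes the submatrix lower triangular with nonzero diagonal, hence invertible, giving the desired linear independence.

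To prove the triangularity claim, first observe that $b_j<a_j$ for all $j$: the ballot hypothesis $a_j\geq 2j$ gives $|[1,a_j-1]\setminus\{a_1,\ldots,a_{j-1}\}|=a_j-j\geq j$, so the $j$-th smallest element of $[n]\setminus\{a_1,\ldots,a_{d+1}\}$ lies strictly below $a_j$. Now assume for contradiction that $\{a'\}\in\Delta_{(a)}$ with $(a)<(a')$, and let $k$ be the smallest index where $a_k\neq a'_k$, so $a_k<a'_k$. For $j<k$, the equality $a_j=a'_j$ puts $a_j$ into $\{a'\}$, forcing $b_j\notin\{a'\}$ by the one-per-pair requirement. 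At index $k$, since $a_\ell=a'_\ell<a_k$ for $\ell<k$ and $a'_\ell\geq a'_k>a_k$ for $\ell\geq k$, we deduce $a_k\notin\{a'\}$; similarly $b_k\notin\{a'\}$, using $a'_\ell=a_\ell\neq b_k$ for $\ell<k$ (by disjointness of $\{a\}$ and $\{b\}$) and $a'_\ell>a_k>b_k$ for $\ell\geq k$. Hence $|\{a'\}\cap\{a_k,b_k\}|=0$, contradicting the requirement that each pair meet $\{a'\}$ in exactly one element. The main obstacle I anticipate is this combinatorial bookkeeping at index $k$, together with stating the reflection bijection in (ii) cleanly.
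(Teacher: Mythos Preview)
Your proof is correct and takes a genuinely different route from the paper's. The paper argues by induction on $n+d$: it observes $B_{n-1,d}\subseteq B_{n,d}$ and sets up a bijection between $B_{n,d}\smallsetminus B_{n-1,d}$ and $B_{n-1,d-1}$ via $\Delta\mapsto\Delta\cdot\{\{n\},\{b\}\}$, thereby obtaining the cardinality recursively; for linear independence it recursively constructs explicit dual functionals $\tau_\Delta$ (linear combinations of $d$-simplices with $\tau_\Delta(\Delta)\neq 0$ and $\tau_\Delta(\Delta')=0$ for $\Delta'\neq\Delta$). By contrast, you count the admissible sequences directly via the ballot/reflection principle, and you prove independence by a clean lower-triangularity argument: evaluating each $\Delta_{(a)}$ on the ``$a$-simplex'' $\{a_1,\ldots,a_{d+1}\}$ and showing, using $b_j<a_j$, that $\{a'\}$ cannot occur in $\Delta_{(a)}$ when $(a)<(a')$ lexicographically. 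Your approach is more self-contained and avoids the recursion entirely; the paper's approach has the side benefit of producing an explicit dual system $\{\tau_\Delta\}$, which could be useful elsewhere but is not needed for the statement at hand. One cosmetic remark: in step (iii), the sentences about $b_j\notin\{a'\}$ for $j<k$ are not actually used in reaching the contradiction at index $k$ and could be dropped.
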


\begin{proof}
For any integers $n \geq 1$ and $d \geq 0$, let $B_{n,d}$ be the set of balanced complexes in the statement.
We prove a stronger statement, namely, 
that for any $n \geq 1$ and $d \geq 0$, $B_{n,d}$ is a basis as desired and additionally for each element of $\Delta \in B_{n,d}$ there exists $\tau_{\Delta}$, a linear combination of $d$-simplices on $\{1,2,\ldots, n\}$, such that $\tau_{\Delta}(\Delta)\neq 0$ and $\tau_{\Delta}(\Delta') = 0$ for all $\Delta'\in B_{n,d} \smallsetminus \{ \Delta \}$.
This holds whenever $n < 2d+2$ because in this case the vector space is trivial and $B_{n,d}$ is empty.
This also holds if $d=0$ and $n \geq 2$, because in this case the space has dimension $n-1$,    
$B_{n,0}$ consists of the $n-1$ linearly independent elements $\Delta_{j}=\{\{ j \},\{ 1 \}\}= 1 \cdot \{ j \}  - 1 \cdot \{ 1 \}$ for every integer $2 \leq j \leq n$,    
and we can take $\tau_{\Delta_j}=\{ j \}= 1 \cdot \{ j \}$ for each integer $2 \leq j \leq n$. 
We now deal with the remaining cases $d \geq 1$ and $n \geq 2d+2$, by induction on $d+n$ for $n$ and $d$ in this range.
In the base case $d=1$ and $n=4$, the claim holds as 
$\Delta_1= \{\{2\},\{1 \} \} \cdot   \{\{4\},\{3 \} \}$     
and      
$\Delta_2 =\{\{3\},\{1 \} \}  \cdot    \{\{4\},\{2 \} \}$,      
and we can take the linear combinations
$\tau_{\Delta_1}= \{2, 4 \} = 1 \cdot \{2, 4 \} $     
and     
$\tau_{\Delta_2}= \{3, 4 \} = 1 \cdot \{3, 4 \} $.    
Fix now some $d \geq 1$ and $n \geq 2d+2$, and assume that our induction hypothesis holds for all $d' \geq 1$ and $n' \geq 2d'+2$ such that $n'+d'<n+d$.
We notice that $B_{n-1,d} \subseteq B_{n,d}$. We also observe that the elements in $B_{n-1,d-1}$ are in bijective correspondence with the elements in $B_{n,d} \smallsetminus B_{n-1,d}$, where the bijection sends an element $ \Delta = \prod_{j=1}^{d} \{\{a_j\},\{b_j \}\} \in B_{n-1,d-1}$ to 
$\Delta  \cdot \{\{n \},\{ b \}\}  \in B_{n,d} \smallsetminus B_{n-1,d}$, where $b = \operatorname{min}\left(\{1,2,\ldots,n-1\} \smallsetminus \{a_1,a_2,\ldots,a_{d}, b_1,b_2,\ldots,b_{d}\}\right)$ and $\{\{n\}, \{b\}\}$ has the balancing $w(\{n\})=1$, $w(\{b\})=-1$.
We know that $B_{n-1,d}$ and $B_{n-1,d-1}$ are bases as desired, because each of these either corresponds a case previously established or a case handled by our induction hypothesis.
Using that $n \geq 2d+2$, by Proposition~\ref{nonsingdimprop} we get that
$|B_{n-1,d}| = \binom{n-1}{d+1} - \binom{n-1}{d}$
and 
$|B_{n-1,d-1}| = \binom{n-1}{d} - \binom{n-1}{d-1}$.
Therefore
\[
|B_{n,d}| =|B_{n-1,d}| + |B_{n-1,d-1}| =\left[\binom{n-1}{d+1} - \binom{n-1}{d}\right]+\left[\binom{n-1}{d} - \binom{n-1}{d-1}\right] = \binom{n}{d+1} - \binom{n}{d},
\]
so $B_{n,d}$ has the right cardinality to be a basis of the desired space. 
Given $\Delta \in B_{n,d}$ let us show the existence of the desired $\tau_\Delta$. 
If $\Delta \in B_{n,d} \smallsetminus B_{n-1,d}$, it has the form $\Delta = \Delta' \cdot \{\{n \},\{ b \}\}$ for some $\Delta'$ in $B_{n-1,d-1}$ and then it is straightforward to verify we can take $\tau_{\Delta}=\tau_{\Delta'}    \cdot  \{n\}$.
If $\Delta \in B_{n-1,d} \subseteq B_{n,d}$, by induction there exist 
$\tau'_{\Delta}$, a linear combination of $d$-simplices on $n-1$ vertices, such that $\tau'_{\Delta}(\Delta) \neq 0$ but $\tau'_{\Delta}(\Delta') = 0$ for all $\Delta' \in B_{n-1,d} \smallsetminus \{ \Delta  \} $. It is straightforward to verify we can take
\[
\tau_{\Delta}=\tau'_{\Delta}-\sum_{\Delta' \in B_{n,d} \smallsetminus B_{n-1,d}} \frac{\tau'_{\Delta}(\Delta')}{\tau_{\Delta'}(\Delta')} \cdot \tau_{\Delta'}.
\]
Finally, we notice that the existence of the $\tau_{\Delta}$ implies that $B_{n,d}$ is a linear independent set, which is therefore a basis of the desired space as it has the right cardinality.
\end{proof}


\bibliographystyle{plain}
\bibliography{divs}

\begin{thebibliography}{10}

\bibitem{TriangulationsRP2}
David Barnette.
\newblock Generating the triangulations of the projective plane.
\newblock {\em Journal of Combinatorial Theory, Series B}, 33(3):222--230,
  1982.

\bibitem{Castravet}
Ana-Maria Castravet.
\newblock The {C}ox ring of {$\overline M_{0,6}$}.
\newblock {\em Trans. Amer. Math. Soc.}, 361(7):3851--3878, 2009.

\bibitem{Hypertrees}
Ana-Maria Castravet and Jenia Tevelev.
\newblock Hypertrees, projections, and moduli of stable rational curves.
\newblock {\em Journal f{\"u}r die reine und angewandte Mathematik (Crelles
  Journal)}, 2013(675):121--180, 2013.

\bibitem{Mon.Non.MDS}
Ana-Maria Castravet and Jenia Tevelev.
\newblock $\overline{M}_{0,n}$ is not a {M}ori dream space.
\newblock {\em Duke Mathematical Journal}, 164(8):1641--1667, 2015.

\bibitem{ChenCoskun}
Dawei Chen and Izzet Coskun.
\newblock Extremal effective divisors on the moduli space of n-pointed genus
  one curves.
\newblock {\em arXiv preprint arXiv:1304.0350}, 2013.

\bibitem{DG13}
Brent Doran and Noah Giansiracusa.
\newblock Projective linear configurations via non-reductive actions.
\newblock {\em arXiv preprint arXiv:1401.0691}, 2014.

\bibitem{DGJ}
Brent Doran, Noah Giansiracusa, and Jensen David.
\newblock A simplicial approach to effective divisors in $\overline{M}_{0,n}$.
\newblock {\em International Mathematics Research Notices}, 2017(2):529--565,
  2016.

\bibitem{GK14}
Jos\'e~Luis Gonz\'alez and Kalle Karu.
\newblock Some non-finitely generated {C}ox rings.
\newblock {\em Compos. Math.}, 152(5):984--996, 2016.

\bibitem{Has03}
Brendan Hassett.
\newblock Moduli spaces of weighted pointed stable curves.
\newblock {\em Advances in Mathematics}, 173(2):316--352, 2003.

\bibitem{HKL}
J{\"{u}}rgen Hausen, Simon Keicher, and Antonio Laface.
\newblock On blowing up the weighted projective plane.
\newblock {\em arXiv preprint arXiv:1608.04542}, 2016.

\bibitem{HuKeel}
Yi~Hu and Sean Keel.
\newblock Mori dream spaces and {GIT}.
\newblock {\em Michigan Math. J.}, 48:331--348, 2000.
\newblock Dedicated to William Fulton on the occasion of his 60th birthday.

\bibitem{Kap}
Mikhail~M Kapranov.
\newblock Chow quotients of {G}rassmannians i.
\newblock In {\em IM Gel' fand Seminar}, volume~16, pages 29--110, 1993.

\bibitem{Mullane}
Scott Mullane.
\newblock On the effective cone of $\overline{ {\mathcal{M}} }_{g,n}$.
\newblock {\em arXiv preprint arXiv:1701.05893}, 2017.

\end{thebibliography}

\end{document}